\newtheorem{thm}{Theorem}[section]
\newtheorem{lem}[thm]{Lemma}
\newtheorem*{rmk*}{Remark}
\newtheorem*{acknowledgements*}{Acknowledgements}
\newtheorem{thmMain}{Theorem}
\theoremstyle{definition}
\title{The Talbot effect as the fundamental solution to the free Schr\"odinger equation}
\author[D. Eceizabarrena]{Daniel Eceizabarrena}
\address{Department of Mathematics and Statistics, University of Massachusetts Amherst, Amherst, MA 01003-9305, USA.
}
\email{\tt eceizabarrena@math.umass.edu}
\subjclass{35Q41, 46F10, 35J05, 78A05}
\keywords{Talbot effect, paraxial approximation, free Schr\"odinger equation, Dirac comb}
\begin{document}

\begin{abstract}
The Talbot effect is usually modeled using the Helmholtz equation, but its main experimental features are captured by the solution to the free Schr\"odinger equation with the Dirac comb as initial datum. 
This simplified description is a consequence of the paraxial approximation in geometric optics. However, it is a heuristic approximation that is not mathematically well justified, so K. I. Oskolkov raised in \cite{Oskolkov2006} the problem of ``mathematizing'' it. We show that it holds exactly in the sense of distributions. 
\end{abstract}

\maketitle


\section{Introduction}

\subsection{The Talbot effect}
In 1836, the English polymath and pioneer in photography Henry Fox Talbot made a ray of light cross \textit{``an equidistant grating with several parallel slits cut''} in a dark chamber, and he observed the result with a magnifying lens \cite{Talbot1836}. Moving the lens back, the illuminated pattern of the grating blurred and recovered neatness alternately. He also remarked the perfect distinction of the  bands, even if the grating was greatly out of the focus of the lens.
This surprising phenomenon, which today we call the Talbot effect, was mathematically studied in 1881 by Lord Rayleigh \cite{Rayleigh1881}, who computed the distance $z_T = d^2/\lambda$ where an image identical to the grating was formed, given the separation of the slits $d$ and the wavelength of monochromatic light $\lambda$. Further experimental findings did not come until the 1960s \cite{HiedemannBreazeale1959,WinthropWorthington1965}, while a more rigorous theoretical analysis was started in the 1990s \cite{BerryGoldberg1988,BerryKlein1996}. We now know that in every rational multiple $z_T p/q$ of Rayleigh's distance, the grating is reproduced with a separation $q$ times smaller. The phenomenon is visually represented by the Talbot carpet in Figure~\ref{Figure_Talbot_Young}. 
A clarifying and interactive simulation of the experiment is available online in \cite{QuantumInteractive}. See also \cite{BerryMarzoliSchleich2001} for further reference.

\begin{figure}[h]
\begin{subfigure}{0.49\linewidth}
\includegraphics[width=\linewidth]{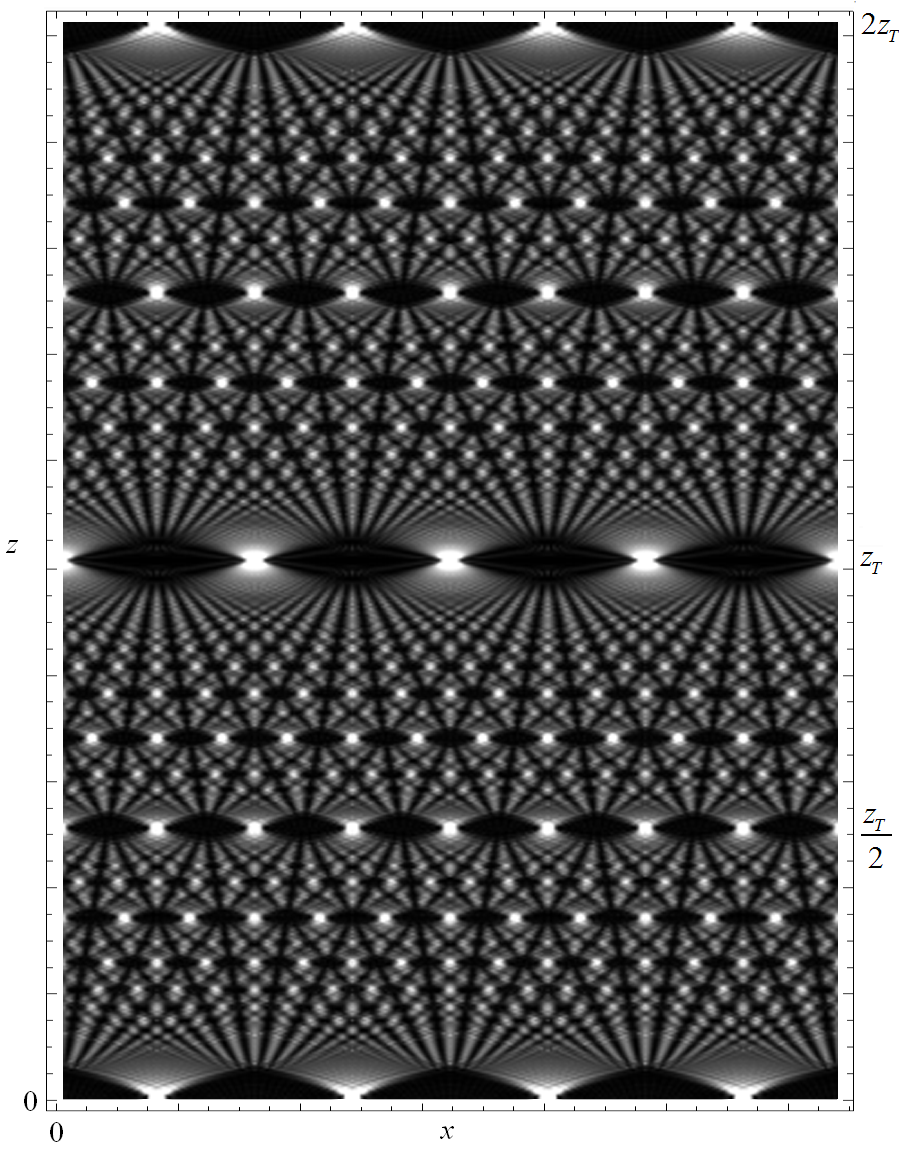}
\caption{The Talbot carpet, a modern visual representation of the Talbot effect. Modified version of original image by Ben Goodman, CC-BY-SA-3.0 \url{https://commons.wikimedia.org/wiki/File:Optical_Talbot_Carpet.png}.}
\label{Figure_TalbotCarpet}
\end{subfigure}
\begin{subfigure}{0.49\linewidth}
\includegraphics[width=0.96\linewidth]{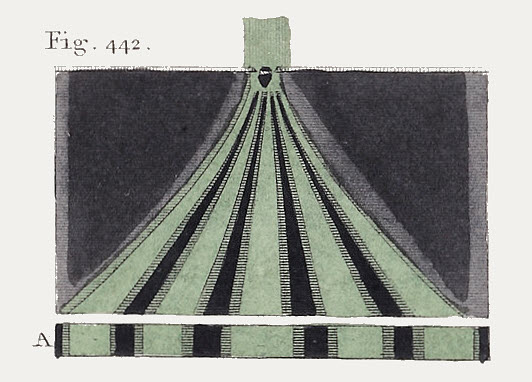}
\caption{The carpet corresponding to Young's celebrated double slit experiment, drawn by Young himself \cite[Plate XXX, Fig. 442]{Young1807}.}
\label{Figure_YoungCarpet}
\end{subfigure}
\caption{Comparison between the Talbot effect and Young's double slit experiment. The experiments are similar, the difference being the number of slits in the grating.}
\label{Figure_Talbot_Young}
\end{figure}

\subsection{The Talbot effect and the Helmholtz equation}
In a two dimensional simplification, the Talbot effect is described by the Helmholtz equation
\begin{equation}\label{Helmholtz_Equation}
\Delta_{x,z} u + k^2 u = 0, \qquad  u = u(x,z),\quad  x \in \mathbb R, \quad z > 0,
\end{equation}
and the grating, which is assumed to have period $d$, is modeled as the boundary datum $u(x,0)$. Since  Rayleigh also showed periodicity of period $z_T$ in $z$, it is convenient to rescale the variables by $\xi = x/d$ and $\zeta = z/z_T$. There are different models for the grating at $\zeta=0$. For instance, in \cite{BerryKlein1996}, Ronchi gratings are proposed, with slits represented by a characteristic function of width $1/2$ centered in the origin and periodically extended. In general, if a periodic grating is represented by its Fourier transform $g(\xi) = \sum_{n \in \mathbb Z} \widehat{g}_n e^{2\pi i n \xi}$, they showed that the mathematical study reduces to the ideal case of the Dirac comb
\begin{equation}\label{Dirac_Comb}
u(\xi,0) = \sum_{n \in \mathbb Z} \delta(\xi - n) = \sum_{n \in  \mathbb Z} e^{2\pi i n \xi}.
\end{equation}
The second identity in \eqref{Dirac_Comb} is a consequence of the Poisson summation formula. Formally, this represents infinitely many infinitesimal slits, which is only an approximation of the physical experiment since such a grating is unreal. However, it can be taken as the fundamental datum for the problem, in parallelism to the fundamental solutions to PDEs. In this article we focus on it, more general data $g$ will be considered in a future work.

The Helmholtz equation \eqref{Helmholtz_Equation} with initial datum \eqref{Dirac_Comb} is solved by
\begin{equation}\label{Helmholtz_Solution}
u(\xi,\zeta) = \sum_{n \in \mathbb{Z}} e^{ 2\pi i (d/\lambda)^2   \sqrt{1-(\lambda n/d)^2} \, \zeta  }\,e^{2\pi i n \xi}, \qquad \xi \in \mathbb R, \quad  \zeta > 0,
\end{equation}
where $k=2\pi/\lambda$. Writing the Laplacian in $(\xi,\zeta)$ as $\Delta_{x,y} = d^{-2}\partial_{\xi\xi} + z_T^{-2} \partial_{\zeta\zeta}$, \eqref{Helmholtz_Solution} comes from testing solutions of the type $u(\xi,\zeta) = \sum_{n\in \mathbb Z} \eta_n(\zeta) \, e^{2\pi i n \xi}$. Also, a choice of the signs in the phase is needed to discard non-physical exponentially growing waves. For the same reason, it can be extended to $\zeta<0$ by setting $u(\xi,-\zeta) = u(\xi,\zeta)$. We refer to \cite{BerryKlein1996,MatsutaniOnishi2003} for details.

\subsection{The Schr\"odinger equation, the paraxial approximation and its mathematization}
The solution \eqref{Helmholtz_Solution} represents the sum of planar waves formed as a result of  diffraction after light crosses the grating. A way to analyze it is to assume that those waves travel paraxially, that is, almost parallel to the $\zeta$ axis. That means assuming that only the waves with $|n| \ll d/\lambda$ in \eqref{Helmholtz_Solution} are relevant, larger $n$ representing waves that either have a direction too bent or that quickly fade away. In that case, one may use the quadratic approximation 
\begin{equation}\label{Paraxial_Approximation_Prelim}
\sqrt{1-x^2} \approx 1-x^2/2
\end{equation}
to get 
\begin{equation}\label{Approximation_By_Schrodinger}
u(\xi,\zeta) \approx e^{2\pi i (d/\lambda)^2 \zeta} \, \sum_{n \in \mathbb \mathbb Z} e^{2\pi i n \xi - i \pi n^2 \zeta}.
\end{equation}
The expression
\begin{equation}\label{Free_Schrodinger_Solution}
v(\xi,\zeta) = \sum_{n \in \mathbb{Z}} e^{2\pi i n \xi - i \pi  n^2\zeta},
\end{equation}
which according to \eqref{Approximation_By_Schrodinger} satisfies $|u(\xi,\zeta)| \approx |v(\xi,\zeta)|$, is the solution to the free Schr\"odinger equation
\begin{equation}\label{Free_Schrodinger_Equation}
v_\zeta = \frac{i}{4\pi} \, v_{\xi\xi}
\end{equation}
with initial/boundary datum as in \eqref{Dirac_Comb}. That this expression captures the main properties of the Talbot effect 
in Figure~\ref{Figure_TalbotCarpet} is a consequence of some basic arithmetic manipulations. Indeed, for any  coprime $p \in \mathbb Z$ and $q \in \mathbb N$,  one can write
\begin{equation}\label{Talbot_By_Schrodinger_Short}
v(\xi,p/q)  =  \frac{1}{q}\,  \sum_{m\in \mathbb Z} \Gamma(p,q;m)  \delta\left(\xi - \frac{p \, (\operatorname{mod} 2)}{2} - \frac{m}{q}\right),
\end{equation}
where 
\begin{equation}
\Gamma(p,q;m) = \sum_{r=0}^{q-1} e^{2\pi i \, \frac{  ( q\, p\text{(mod} 2) / 2 + m )  r  - (p/2) r^2  }{q} }
\end{equation}
is a variation of the classical generalized quadratic Gauss sums. As we briefly explain in Appendix~\ref{Section_GaussSums}, we have $|\Gamma(p,q;m)|= \sqrt{q}$ for all $p,q,m$. It is apparent that the support of \eqref{Talbot_By_Schrodinger_Short} coincides with the pattern in Figure~\ref{Figure_TalbotCarpet}, since at $\zeta=p/q$ there are $q$ times more deltas than in the grating $v(\xi,0)$. For the sake of completeness, the reader can find the details on how to derive \eqref{Talbot_By_Schrodinger_Short} in Appendix~\ref{AppendixTalbot}.

The paraxial approximation \eqref{Approximation_By_Schrodinger} and its implications in the Talbot effect were studied by Berry and Klein in \cite{BerryKlein1996}, also with gratings with finitely many slits and of finite width. Also, to measure its limitations, a \textit{post-paraxial} approximation of the square root in \eqref{Helmholtz_Solution} by $\sqrt{1-x^2} \approx 1-x^2/2 - x^4/8$ was studied, and blurring effects not captured by the paraxial method appeared. The effect of further terms in the Taylor series was claimed to be insignificant when compared to experiments with the gratings that are interesting for the Talbot effect.
Thus, from an experimental perspective, the paraxial approximation yields a faithful representation of the Talbot effect up to some blurring. 
However, this approximation is not properly justified from a mathematical perspective, since clearly the paraxial approximation cannot be applied in \eqref{Helmholtz_Solution} for all $n \in \mathbb Z$. This was noted by  Oskolkov \cite[p.200]{Oskolkov2006}, who proposed to work on a ``mathematization" of these arguments. The objective of this article is to tackle this question.




\subsection{Why to study the Talbot effect mathematically?}\label{SECTION_MathematicalWork}

The description of the Talbot effect by means of the Schr\"odinger equation has physical consequences unrelated to the original optical experiment that make it subject to a mathematical study.  For instance, the phenomenon of quantum revivals is due to the Schr\"odinger description. Like in the optical Talbot effect, a quantum wave-packet is repeatedly reconstructed at fractional multiples of a \textit{quantum period} that depends on the Planck constant and on the energy levels \cite{BerryMarzoliSchleich2001}. One more surprising consequence 
is that the Talbot effect appears in the motion of polygonal vortex filaments, as it was shown in \cite{DelaHozVega2014,JerrardSmets2015} by modeling their evolution using the vortex filament equation and supported by experiments \cite{KlecknerScheelerIrvine2014} and numeric simulations \cite{KumarVideos}.
The mathematical reason behind this is that the vortex filament equation can be transformed to a cubic nonlinear Schr\"odinger equation. The Dirac comb \eqref{Dirac_Comb} as the datum arises interpreting the curvature of the polygon as a periodic sum of Dirac deltas.

In a more abstract setting,
analysts have become interested in the Talbot effect due to its implications in the study of the Schr\"odinger equation. 
The main motivation for this seems to have been Berry and Klein's conjectures on the fractality of the graphs of $\operatorname{Re} v_g(\xi,\zeta)$, $\operatorname{Im} v_g(\xi,\zeta)$ and $|v_g(\xi,\zeta)|^2$, where as above $v_g(\xi,\zeta)=\sum_{n}\widehat g_n e^{2\pi i n \xi - i\pi n^2 \zeta}$ correspond to different gratings \cite{Berry1996,BerryKlein1996}. Fractality was conjectured along lines with almost any fixed $\xi$ or with fixed irrational $\zeta$, or even along oblique lines, with a different dimension in each case. 
Corresponding mathematically rigorous results were proved in \cite{KapitanskiRodnianski1999,Oskolkov1992,Oskolkov2006,Rodnianski2000,Taylor2003}. Moreover,  the Talbot effect and these fractal considerations have also been observed experimentally in a nonlinear setting \cite{ZhangWenZhuXiao2010} and numerically for linear and non-linear dispersive equations \cite{ChenOlver2013,ChenOlver2014,Olver2010},
and some of them have been proved rigorously  
 \cite{ChousionisErdoganTzirakis2015,ErdoganShakan2019,ErdoganTzirakis2013}.
Also in this spirit, variations of the solution \eqref{Free_Schrodinger_Solution} like  Riemann's non-differentiable function \cite{OskolkovChakhkiev2010} and a discrete Hilbert transform \cite{OskolkovChakhkiev2013} have also been studied along lines with fixed space or time.

 \section{Statement of results}\label{Section_Results}

From \eqref{Helmholtz_Solution} and \eqref{Approximation_By_Schrodinger}, let us define the adapted Helmholtz solution
\begin{equation}\label{Candidate_For_Convergence}
w(\xi,\zeta) =  e^{ - 2\pi i (d/\lambda)^2  \zeta } \, u(\xi, \zeta) = e^{ - 2\pi i (d/\lambda)^2  \zeta } \, \sum_{n \in \mathbb{Z}} e^{ 2\pi i (d/\lambda)^2  \sqrt{1-(\lambda n/d)^2} \,  \zeta }\,e^{2\pi i n \xi}.
\end{equation}
The quotient  $d/\lambda$ is a natural parameter in this problem, so let us call $r=d/\lambda$ and rewrite \eqref{Candidate_For_Convergence} as
\begin{equation}\label{Definition_Of_wr}
w_r(\xi, \zeta) = e^{-2\pi i r^2\zeta} \, \sum_{n \in \mathbb{Z}} e^{ 2\pi i r^2  \sqrt{1- (n/r)^2} \,  \zeta }\,e^{2\pi i n \xi}, \qquad \zeta  \geq 0.
\end{equation}
As with $u$ in \eqref{Helmholtz_Solution}, when $\zeta<0$ we may set $w_r(\xi,\zeta) = w_r(\xi,|\zeta|)$. 

The paraxial approximation \eqref{Paraxial_Approximation_Prelim} is valid for $|n| \ll r$, but not for larger values $|n| \gtrsim r$. As noted by Oskolkov \cite{Oskolkov2006}, this suggests that the paraxial approximation would be correct for data with frequencies smaller than $r$, or smooth enough to be approximated by such frequencies with a small error. However, this does not  apply to \eqref{Definition_Of_wr} because it corresponds to the Dirac comb on the boundary, which we do not expect to approximate only with small frequencies in view of the Poisson sumamation formula \eqref{Dirac_Comb}. Thus, in this setting it is reasonable to ask for $r \to \infty$. Indeed, we are going to prove that if $r \to \infty$ the Schr\"odinger solution \eqref{Free_Schrodinger_Solution} can be rigorously obtained from the Helmholtz solution \eqref{Definition_Of_wr}. 


Even if the Helmholtz solution \eqref{Definition_Of_wr} is a function, $v$ in \eqref{Free_Schrodinger_Solution} is not and has to be treated as a distribution whose action in the space of Schwartz functions $\mathcal S$ is
\begin{equation}
\langle v, \varphi \rangle = \sum_{n \in \mathbb Z} e^{-i\pi n^2 \zeta} \widehat{\varphi}(n), \qquad \forall \varphi \in \mathcal S(\mathbb R). 
\end{equation}
Thus, since we cannot expect convergence in the sense of functions, the results that we seek are in a distributional sense. We recall that if $\mathcal S'$ is the space of tempered distributions, a sequence  $T_n \in \mathcal{S}'$ is said to converge to $T \in \mathcal{S}'$ if 
\begin{equation}\label{Definition_Of_Convergence_Distributions}
\lim_{n \to \infty} \langle T_n, \varphi \rangle = \langle T, \varphi \rangle, \qquad \forall \varphi \in \mathcal{S}.
\end{equation}
We analyze convergence on different types of lines in the spirit of the fractal conjectures of Berry and Klein \cite{Berry1996,BerryKlein1996}.

\subsection{Convergence on horizontal lines}
The Talbot effect is a phenomenon that happens in the $\xi$ variable, for $\zeta$ is fixed (as can be observed in Figure~\ref{Figure_TalbotCarpet}). Thus, we consider  the solutions \eqref{Free_Schrodinger_Solution} and \eqref{Candidate_For_Convergence} as distributions in $\mathcal S'(\mathbb R)$ in the variable $\xi$.
Alternatively, both $w_r(\cdot,\zeta)$ and  $v(\cdot,\zeta)$ are 1-periodic Fourier series, so they can be regarded as periodic distributions in $\mathbb{R}$.
Denoting the torus by $\mathbb T = \mathbb R / \mathbb Z$, let $\mathcal{P}(\mathbb{T})$ be the space of smooth 1-periodic functions and $\mathcal{P}'(\mathbb{T})$ the space of 1-periodic distributions. In this setting, the action of $v$ is 
\begin{equation}
\langle v, \varphi \rangle = \sum_{n \in \mathbb Z} e^{-i\pi n^2 \zeta} \widehat \varphi_n, \qquad \forall \varphi \in \mathcal P(\mathbb T),
\end{equation}
where $\widehat \varphi_n$ is the $n$-th Fourier coefficient of $\varphi$. This allows us to be more precise in terms of Sobolev spaces
\begin{equation}\label{Sobolev_Spaces_In_T}
H^s(\mathbb{T}) = \left\{ \, f \in L^2(\mathbb{T}) \quad \mid \quad \lVert f \rVert_{H^s(\mathbb{T})}^2 = \sum_{n \in \mathbb{Z}} (1+n^2)^s\, | \widehat{f}_n  |^2 < \infty  \, \right\}, \qquad \forall s \geq 0,
\end{equation} 
which can be generalized to $s \in \mathbb{R}$ if instead of $f \in L^2(\mathbb{T})$ we consider $f \in \mathcal{P}'(\mathbb{T})$. Moreover, the duality $\left(H^s(\mathbb{T})\right)' = H^{-s}(\mathbb{T})$ holds.

We prove convergence in the two setting presented above.

\begin{thmMain}\label{THEOREM_Convergence_Tempered_And_Periodic_Distributions}
Let $\zeta >0$. Let $w_r = w_r(\cdot,\zeta)$ defined in \eqref{Definition_Of_wr} be considered as a function of $\xi$ alone, and $v= v(\cdot,\zeta)$ defined in \eqref{Free_Schrodinger_Solution} as a distribution of $\xi$ alone. Then, 
\begin{equation}
\lim_{r \to \infty} w_r(\cdot,\zeta) = v(\cdot,\zeta) \qquad\qquad \text{ in } \quad \mathcal{S}'(\mathbb{R}).
\end{equation}
If $ v(\cdot,\zeta)$ is considered as a periodic distribution, then for any $s > 1/2$ we have 
\begin{equation}
\lim_{r \to \infty} w_r(\cdot,\zeta) = v(\cdot,\zeta) \qquad\qquad \text{ in } \quad H^{-s}(\mathbb{T}).
\end{equation}
In both cases, if $\zeta<0$, then $w_r(\cdot,\zeta)=w_r(\cdot,|\zeta|)$, so $\lim_{r \to \infty}w_r(\cdot,\zeta)=v(\cdot,|\zeta|)$.
\end{thmMain}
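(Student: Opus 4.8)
The plan is to reduce the theorem to the pointwise convergence of the Fourier coefficients of $w_r(\cdot,\zeta)$ to those of $v(\cdot,\zeta)$, together with a uniform bound on those coefficients, and then to conclude by the dominated convergence theorem for series. Writing $w_r(\cdot,\zeta)=\sum_{n\in\mathbb Z}c_n(r)\,e^{2\pi i n\xi}$, from \eqref{Definition_Of_wr} and the sign choice that discards the exponentially growing waves in \eqref{Helmholtz_Solution} one has
\[
c_n(r)=e^{2\pi i r^2\zeta\,(\sqrt{1-(n/r)^2}-1)}\ \ (|n|\le r),
\qquad
c_n(r)=e^{-2\pi i r^2\zeta}\,e^{-2\pi r^2\zeta\sqrt{(n/r)^2-1}}\ \ (|n|> r).
\]
Two elementary facts do all the work. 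First, $|c_n(r)|\le 1$ for every $n\in\mathbb Z$ when $\zeta>0$: this is immediate for $|n|\le r$, where the exponent is purely imaginary, and for $|n|>r$ it is precisely here that one uses $\zeta>0$ and the physical sign choice (this is also why the case $\zeta<0$ is handled by the reflection $w_r(\cdot,\zeta)=w_r(\cdot,|\zeta|)$). Second, for each fixed $n$, $c_n(r)\to e^{-i\pi n^2\zeta}$ as $r\to\infty$: once $r>|n|$ we are in the first regime, and the expansion $\sqrt{1-t}-1=-t/2+O(t^2)$ at $t=(n/r)^2$ gives $2\pi i r^2\zeta\,(\sqrt{1-(n/r)^2}-1)=-i\pi n^2\zeta+O(n^4\zeta/r^2)$.

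For the periodic statement I would note that $w_r(\cdot,\zeta)$ and $v(\cdot,\zeta)$ are $1$-periodic distributions with Fourier coefficients $c_n(r)$ and $e^{-i\pi n^2\zeta}$ respectively, so by \eqref{Sobolev_Spaces_In_T},
\[
\lVert w_r(\cdot,\zeta)-v(\cdot,\zeta)\rVert_{H^{-s}(\mathbb T)}^2=\sum_{n\in\mathbb Z}(1+n^2)^{-s}\,\bigl|c_n(r)-e^{-i\pi n^2\zeta}\bigr|^2 .
\]
Each term is bounded by $4\,(1+n^2)^{-s}$, which is summable exactly when $s>1/2$, and tends to $0$ as $r\to\infty$ by the second fact; dominated convergence then forces the norm to tend to $0$. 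As a by-product, $w_r(\cdot,\zeta)\in H^{-s}(\mathbb T)$ for every $s>1/2$, matching the regularity of the Dirac comb on $\mathbb T$.

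For the tempered statement I would use that a $1$-periodic function with uniformly bounded Fourier coefficients is a tempered distribution on $\mathbb R$ with $\langle w_r(\cdot,\zeta),\varphi\rangle=\sum_{n\in\mathbb Z}c_n(r)\,\widehat\varphi(n)$ for $\varphi\in\mathcal S(\mathbb R)$ (the exchange of sum and integral being legitimate by the first fact and $\widehat\varphi\in L^1$, or trivially if only the propagating modes are kept). Then
\[
\langle w_r(\cdot,\zeta)-v(\cdot,\zeta),\varphi\rangle=\sum_{n\in\mathbb Z}\bigl(c_n(r)-e^{-i\pi n^2\zeta}\bigr)\,\widehat\varphi(n),
\]
and since $\widehat\varphi\in\mathcal S(\mathbb R)$ is summable while $|c_n(r)-e^{-i\pi n^2\zeta}|\le 2$ with pointwise limit $0$, dominated convergence again gives $\langle w_r(\cdot,\zeta),\varphi\rangle\to\langle v(\cdot,\zeta),\varphi\rangle$; the reflection handles $\zeta<0$.

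I do not expect a deep obstacle: the whole point is that the ``bad'' frequencies $|n|\gtrsim r$, where the paraxial approximation \eqref{Paraxial_Approximation_Prelim} is invalid, nevertheless carry coefficients of modulus at most $1$ and are pushed off to infinity as $r\to\infty$, so any \emph{fixed} test object --- a Schwartz function, or an $H^{s}(\mathbb T)$ function with $s>1/2$ --- whose Fourier data decay cannot detect them in the limit. The one step that genuinely needs care is the uniform bound $|c_n(r)|\le1$ for the evanescent modes, which is exactly where the hypothesis $\zeta>0$ and the physical sign choice in \eqref{Helmholtz_Solution} enter; everything else is bookkeeping plus the discrete dominated convergence theorem.
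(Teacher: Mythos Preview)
Your argument is correct and is in fact cleaner than the paper's for this particular theorem. The paper introduces an auxiliary threshold $\mu(r)$ with $\mu(r)\to\infty$ and $\mu(r)/r^{1/4}\to 0$, splits $w_r$ into a paraxial piece $|n|\le\mu(r)$, a mid-range piece $\mu(r)<|n|\le r+1$, and an evanescent piece $|n|>r+1$, proves $L^\infty_{\xi,\zeta}$ convergence for the first and third pieces, and then uses the decay of $\widehat\varphi$ (or the $H^s$ condition) only on the mid-range. You bypass $\mu(r)$ entirely: the single uniform bound $|c_n(r)|\le 1$ plus pointwise convergence of each coefficient lets dominated convergence handle all frequencies at once, and in the periodic case you compute the $H^{-s}$ norm directly rather than via duality. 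The one minor point to tidy is the sign in the pairing, $\langle e^{2\pi i n\xi},\varphi\rangle=\widehat\varphi(-n)$, and the justification of the sum--integral exchange, which is indeed fine because the propagating modes form a finite sum and the evanescent tail is absolutely summable.

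What the paper's decomposition buys is twofold. First, it isolates genuine $L^\infty$ convergence on the paraxial and evanescent pieces (Lemmas~\ref{Lemma_w1_Pointwise} and~\ref{Lemma_PointwiseConvergenceHigh}), which is strictly more than distributional convergence and makes the role of the paraxial approximation transparent. Second, and more importantly for the paper's architecture, the same three-range splitting is reused verbatim in the proofs of Theorems~\ref{THEOREM_Vertical} and~\ref{THEOREM_Oblique} for vertical and oblique lines, where the problem is \emph{not} diagonalized by a Fourier series in the relevant variable and your coefficient-by-coefficient dominated convergence argument has no direct analogue; there one really needs the $L^\infty$ control on the extreme pieces and a separate oscillatory-integral estimate on the middle.
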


\subsection{Convergence on vertical and oblique lines}
In view of Berry and Klein's fractal conjectures,
we should also pay attention to vertical and oblique lines. For instance, fixing $\xi \in \mathbb R$, the action of $v(\xi,\cdot) \in \mathcal S'(\mathbb{R})$  is
\begin{equation}
\langle v(\xi,\cdot),\varphi \rangle = \sum_{n \in \mathbb Z} e^{2\pi i n \xi}\, \widehat{\varphi}(n^2/2),\qquad \forall \varphi \in \mathcal S(\mathbb{R}).
\end{equation}
In  the case of oblique lines $\zeta = m\xi - k$ for $m,k\in\mathbb R$, the distribution $v \in \mathcal S'(\mathbb{R})$ in the variable $\xi$ is 
\begin{equation}
\langle v(\xi, m\xi - k), \varphi(\xi)\rangle = \sum_{n \in \mathbb Z} e^{i\pi n^2 k} \, \widehat{\varphi}\left(\frac{mn^2}{2} - n\right), \qquad \forall \varphi \in \mathcal S(\mathbb{R}).
\end{equation}
The study of $\zeta>0$ best represents the original Talbot's experiments, but we also prove convergence for $\zeta \in \mathbb R$.
The results analogous to Theorem~\ref{THEOREM_Convergence_Tempered_And_Periodic_Distributions} are as follows. 

\begin{thmMain}\label{THEOREM_Vertical}
Let $\xi \in\mathbb R$. Let $w_r = w_r(\xi,\cdot)$ defined in \eqref{Definition_Of_wr} be considered as a function of $\zeta>0$ alone, and $v = v(\xi,\cdot)$ defined in \eqref{Free_Schrodinger_Solution} as a distribution of $\zeta>0$ alone. Then, 
\begin{equation}
\lim_{r \to \infty} w_r(\xi,\cdot)\, \mathbbm 1_{(0,\infty)} = v(\xi,\cdot) \, \mathbbm 1_{(0,\infty)} \qquad\qquad \text{ in } \quad \mathcal{S}'(\mathbb{R}).
\end{equation}
If $w_r(\xi,\zeta)$ is given for all $\zeta \in \mathbb R$ by $w_r(\xi,\zeta)=w_r(\xi,|\zeta|)$, then $\lim_{r \to \infty} w_r(\xi,\cdot) = v(\xi,|\cdot|)$ in $\mathcal{S}'(\mathbb{R})$.
\end{thmMain}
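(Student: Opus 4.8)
The plan is to test both distributions against an arbitrary $\varphi\in\mathcal S(\mathbb R)$ and to prove $\langle w_r(\xi,\cdot)\mathbbm 1_{(0,\infty)}-v(\xi,\cdot)\mathbbm 1_{(0,\infty)},\varphi\rangle\to 0$ as $r\to\infty$. Fix $\xi$; the estimates below use $\xi\notin\mathbb Z$ through a Dirichlet-kernel bound, and the boundary case $\xi\in\mathbb Z$ is treated separately. For $|n|\le r$ the $n$-th term of $w_r$ oscillates in $\zeta$, while for $|n|>r$ it is evanescent, namely $e^{2\pi i n\xi}e^{-\mu_{n,r}\zeta}$ with $\mu_{n,r}=2\pi r^2\big(i+\sqrt{(n/r)^2-1}\big)$, so that $\operatorname{Re}\mu_{n,r}=2\pi r\sqrt{n^2-r^2}>0$ and $|\mu_{n,r}|=2\pi r|n|$. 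Summing the evanescent part by Dirichlet's test (its coefficients $e^{-\operatorname{Re}\mu_{n,r}\zeta}$ being positive and decreasing in $|n|$, the partial sums of $e^{2\pi i n\xi}$ bounded by $|\sin\pi\xi|^{-1}$) shows $w_r(\xi,\cdot)$ is a bounded function on $(0,\infty)$, hence $w_r(\xi,\cdot)\mathbbm 1_{(0,\infty)}\in\mathcal S'(\mathbb R)$; and $v(\xi,\cdot)\mathbbm 1_{(0,\infty)}\in\mathcal S'(\mathbb R)$ with action $\sum_n e^{2\pi i n\xi}b_n$, where $b_n:=\int_0^\infty e^{-i\pi n^2\zeta}\varphi(\zeta)\,d\zeta$ satisfies $|b_n|\lesssim_\varphi n^{-2}$ by one integration by parts. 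Expanding the pairing and using the same Dirichlet bound to justify the rearrangement of the (only conditionally convergent) evanescent part, I obtain $\langle w_r(\xi,\cdot)\mathbbm 1_{(0,\infty)},\varphi\rangle=\sum_{|n|\le r}e^{2\pi i n\xi}a_n(r)+\sum_{|n|>r}e^{2\pi i n\xi}a_n(r)$, with $a_n(r)=\int_0^\infty e^{i\Psi_{n,r}\zeta}\varphi\,d\zeta$ for $|n|\le r$ (real phase $\Psi_{n,r}:=2\pi r^2(\sqrt{1-(n/r)^2}-1)$) and $a_n(r)=\int_0^\infty e^{-\mu_{n,r}\zeta}\varphi\,d\zeta$ for $|n|>r$.

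For the propagating modes $|n|\le r$, the elementary inequality $1-\sqrt{1-x}\ge x/2$ on $[0,1]$ gives $|\Psi_{n,r}|\ge\pi n^2$, so one integration by parts yields $|a_n(r)|\lesssim_\varphi n^{-2}$ uniformly in $r$; since also $|b_n|\lesssim_\varphi n^{-2}$, the differences $a_n(r)-b_n$ admit a summable majorant independent of $r$. For each fixed $n$ one has $\Psi_{n,r}\to-\pi n^2$, hence $a_n(r)\to b_n$ by dominated convergence in $\zeta$, and then dominated convergence in $n$ gives $\sum_{|n|\le r}e^{2\pi i n\xi}(a_n(r)-b_n)\to 0$; also $\sum_{|n|>r}e^{2\pi i n\xi}b_n\to 0$ as the tail of an absolutely convergent series. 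So everything reduces to showing $\sum_{|n|>r}e^{2\pi i n\xi}a_n(r)\to 0$.

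This evanescent sum is the \emph{main obstacle}, precisely because it is only conditionally convergent: one integration by parts only gives $|a_n(r)|\lesssim_\varphi(r|n|)^{-1}$, whose sum over $|n|>r$ diverges logarithmically, so the oscillation $e^{2\pi i n\xi}$ must be exploited. Integrating by parts twice, $a_n(r)=\varphi(0)/\mu_{n,r}+R_n(r)$ with $|R_n(r)|\lesssim_\varphi(r|n|)^{-2}$, whence $\sum_{|n|>r}|R_n(r)|\lesssim_\varphi r^{-3}\to 0$. Rationalising, $\mu_{n,r}^{-1}=\frac{\sqrt{n^2-r^2}}{2\pi r\,n^2}-\frac{i}{2\pi n^2}$; the contribution of the second term to the sum is again a tail of an absolutely convergent series, while for the first term I apply Dirichlet's test in $n$: the coefficients $c_n:=\sqrt{n^2-r^2}\,n^{-2}$ are unimodal in $n>r$ (increasing for $r<n<r\sqrt2$, decreasing afterwards) with $\sup_{n>r}c_n\le 1/(2r)$, and the partial sums of $e^{2\pi i n\xi}$ are bounded by $|\sin\pi\xi|^{-1}$, so $\big|\sum_{|n|>r}c_n e^{2\pi i n\xi}\big|\lesssim(r|\sin\pi\xi|)^{-1}$ and hence the first-term contribution is $\lesssim_\varphi(r^2|\sin\pi\xi|)^{-1}\to 0$. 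Combining the three pieces gives $\sum_{|n|>r}e^{2\pi i n\xi}a_n(r)\to 0$, which proves the $\zeta>0$ statement.

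Finally, the extension to $\zeta\in\mathbb R$ via $w_r(\xi,\zeta)=w_r(\xi,|\zeta|)$ is immediate: pairing with $\varphi\in\mathcal S(\mathbb R)$ and folding the negative half-line gives $\langle w_r(\xi,|\cdot|),\varphi\rangle=\langle w_r(\xi,\cdot)\mathbbm 1_{(0,\infty)},\varphi(\cdot)+\varphi(-\cdot)\rangle$, and the identical relation holds for $v$, so applying the already-established convergence to the even Schwartz function $\varphi(\cdot)+\varphi(-\cdot)$ yields $w_r(\xi,\cdot)\to v(\xi,|\cdot|)$ in $\mathcal S'(\mathbb R)$. The hardest point is the conditional convergence of the evanescent-mode sum and the Dirichlet-summation estimate that controls it, which is also where the hypothesis $\xi\notin\mathbb Z$ enters through the factor $|\sin\pi\xi|^{-1}$.
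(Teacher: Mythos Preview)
Your argument for $\xi\notin\mathbb Z$ is correct and takes a genuinely different route from the paper's. The paper splits $w_r$ into three frequency bands via an auxiliary cutoff $\mu(r)\ll r^{1/4}$: the paraxial range $|n|\le\mu(r)$ is handled by a pointwise Taylor estimate, the far evanescent tail $|n|>r+1$ by a separate pointwise decay lemma, and only the intermediate band $\mu(r)<|n|\le r+1$ invokes the half-line integration-by-parts bound, with the single evanescent term $n=\lfloor r+1\rfloor$ isolated and estimated on its own. You instead make a clean two-way split at $|n|=r$: every propagating mode receives the uniform $|a_n(r)|\lesssim n^{-2}$ bound from one integration by parts (using $|\Psi_{n,r}|\ge\pi n^2$) and is then handled by dominated convergence in $n$, while the \emph{entire} evanescent sum is disposed of by a second integration by parts together with Dirichlet's test in $n$. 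Your approach is more streamlined---no auxiliary $\mu(r)$, no appeal to the pointwise lemmas---at the price of exploiting the oscillation $e^{2\pi in\xi}$ and hence needing $\xi\notin\mathbb Z$; the paper's argument makes no such distinction.

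That said, there is a real gap: you announce that ``the boundary case $\xi\in\mathbb Z$ is treated separately'' but never carry this out, and every Dirichlet-test bound in your argument collapses when $\sin\pi\xi=0$. This case is not a routine afterthought. For $\xi\in\mathbb Z$ the evanescent part of $w_r(\xi,\cdot)$ is the positive series
\[
2\,e^{-2\pi ir^2\zeta}\sum_{n>r}e^{-2\pi r\sqrt{n^2-r^2}\,\zeta}\;\ge\;2\sum_{n>r}e^{-2\pi r n\zeta}\;\asymp\;\frac{1}{r\zeta}\qquad(\zeta\to 0^+),
\]
so $w_r(\xi,\cdot)\mathbbm 1_{(0,\infty)}$ is not locally integrable at $\zeta=0$ and is therefore not a regular tempered distribution in the usual sense. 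You must either explain how the pairing is to be interpreted at integer $\xi$ and supply the missing argument, or explicitly restrict the statement to $\xi\notin\mathbb Z$.
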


\begin{thmMain}\label{THEOREM_Oblique}
Let $m >0$ and $k \in \mathbb R$. Let $w_r$ be defined in \eqref{Definition_Of_wr} and $v$ defined in \eqref{Free_Schrodinger_Solution}. Then, as distributions in the variable $\xi$ in the upper half plane,
\begin{equation}
\lim_{r \to \infty} w_r(\xi,m\xi - k) \, \mathbbm 1_{(k/m,\infty)} = v(\xi,m\xi - k) \, \mathbbm 1_{(k/m,\infty)} \qquad\qquad \text{ in } \quad \mathcal{S}'(\mathbb{R}).
\end{equation}
If $m<0$, the analogous result holds with the characteristic function $ \mathbbm 1_{(-\infty,-k/m)} $ instead. If the distributions are considered in the whole plane with $\xi \in \mathbb R$, then $\lim_{r \to \infty} w_r(\xi,m\xi - k) = v(\xi,|m\xi - k|)$  in $\mathcal{S}'(\mathbb{R})$.
\end{thmMain}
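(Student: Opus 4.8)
\emph{Proof proposal.} The plan is to test both distributions against an arbitrary $\varphi\in\mathcal S(\mathbb R)$ and reduce the whole statement to the convergence of one series. I would carry out the case $m>0$ in detail; $m<0$ should follow from the symmetric computation with the two half-lines interchanged, and the statement in the whole plane by gluing the two one-sided limits (they agree off the single point $\xi=k/m$, and $w_r(\xi,\zeta)=w_r(\xi,|\zeta|)$). Put $\psi:=\varphi\,\mathbbm 1_{(k/m,\infty)}$, so $\widehat\psi(t)=\int_{k/m}^{\infty}\varphi(\xi)e^{-2\pi it\xi}\,d\xi$; one integration by parts — $k/m$ being the only point where $\psi$ fails to be smooth — gives the mild decay $|\widehat\psi(t)|\lesssim_{\varphi}(1+|t|)^{-1}$, which is all I will need. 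Writing $v(\xi,m\xi-k)=\sum_{n\in\mathbb Z}e^{i\pi n^2 k}e^{2\pi i(n-mn^2/2)\xi}$ (a distribution that is the derivative of a continuous function, so the pairing with $\psi$ is legitimate), the target is
\begin{equation}
\langle v(\xi,m\xi-k)\,\mathbbm 1_{(k/m,\infty)},\varphi\rangle=\sum_{n\in\mathbb Z}e^{i\pi n^2 k}\,\widehat\psi\!\left(\tfrac{m}{2}n^2-n\right),
\end{equation}
which converges absolutely because $\big|\tfrac m2 n^2-n\big|\gtrsim_m n^2$.

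On $\{\xi>k/m\}$, i.e.\ where $\zeta=m\xi-k>0$, I would split $w_r(\xi,m\xi-k)=P_r(\xi)+E_r(\xi)$ into the propagating modes $|n|\le r$ and the evanescent ones $|n|>r$:
\begin{equation}
P_r(\xi)=\sum_{|n|\le r}e^{-2\pi i\mu_r(n)k}\,e^{2\pi i(m\mu_r(n)+n)\xi},\qquad \mu_r(n):=r^2\!\left(\sqrt{1-(n/r)^2}-1\right)=\frac{-n^2}{1+\sqrt{1-(n/r)^2}},
\end{equation}
with $E_r(\xi)=e^{-2\pi ir^2(m\xi-k)}\sum_{|n|>r}e^{-2\pi r\sqrt{n^2-r^2}\,(m\xi-k)}e^{2\pi in\xi}$, absolutely convergent for $m\xi-k>0$. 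Pairing $P_r\,\mathbbm 1_{(k/m,\infty)}$ with $\varphi$ gives $\sum_{|n|\le r}e^{-2\pi i\mu_r(n)k}\,\widehat\psi(-(m\mu_r(n)+n))$; since $\mu_r(n)\to-n^2/2$ and $\widehat\psi$ is continuous, each summand tends to $e^{i\pi n^2 k}\widehat\psi(\tfrac m2 n^2-n)$, and since $-n^2\le\mu_r(n)\le-\tfrac12 n^2$ forces $|m\mu_r(n)+n|\ge\tfrac m2 n^2-|n|$, the decay of $\widehat\psi$ furnishes a summable majorant $\lesssim_{\varphi,m}n^{-2}$ uniform in $r$ (the finitely many small $|n|$ being bounded by $\|\widehat\psi\|_\infty\le\|\varphi\|_{L^1}$). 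Dominated convergence for series then yields $\langle P_r\,\mathbbm 1_{(k/m,\infty)},\varphi\rangle\to\sum_n e^{i\pi n^2 k}\widehat\psi(\tfrac m2 n^2-n)$, exactly the target.

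The remaining — and, I expect, the hard — step is $\langle E_r\,\mathbbm 1_{(k/m,\infty)},\varphi\rangle\to 0$. The obstacle is that $E_r$ does not tend to $0$ in any pointwise or normwise sense: near the endpoint $\xi=k/m$ all the damping factors $e^{-2\pi r\sqrt{n^2-r^2}(m\xi-k)}$ approach $1$, so no bound on the \emph{size} of the evanescent tail can succeed, and one must exploit the cancellation from the fast oscillation $e^{2\pi in\xi-2\pi ir^2(m\xi-k)}$, of frequency $n-mr^2$ — which is $\gtrsim_m r^2$ precisely for the undamped modes $n\approx r$. Concretely I would substitute $t=m\xi-k$, writing $\langle E_r\,\mathbbm 1_{(k/m,\infty)},\varphi\rangle=\frac1m\sum_{|n|>r}e^{2\pi ink/m}\int_0^\infty e^{-\sigma_n t}g(t)\,dt$ with $g:=\varphi((\,\cdot\,+k)/m)\in\mathcal S$ and $\sigma_n:=2\pi r\sqrt{n^2-r^2}+2\pi i(r^2-n/m)$, so $\operatorname{Re}\sigma_n>0$. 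Two integrations by parts give $\int_0^\infty e^{-\sigma_n t}g=\sigma_n^{-1}g(0)+\sigma_n^{-2}g'(0)+\sigma_n^{-2}\int_0^\infty e^{-\sigma_n t}g''$. Using $|\sigma_n|\ge|\operatorname{Im}\sigma_n|\gtrsim_m r^2$ for $r<|n|<2r$ and $|\sigma_n|\ge\operatorname{Re}\sigma_n\gtrsim r|n|$ for $|n|\ge 2r$, one gets $\sum_{|n|>r}|\sigma_n|^{-2}\lesssim_m r^{-3}\to 0$, so the $g'(0)$-sum and the remainder vanish, and everything collapses to showing $g(0)\sum_{|n|>r}e^{2\pi ink/m}\sigma_n^{-1}\to 0$. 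Here I would expand $\sigma_n^{-1}=\big(2\pi n(r-i\operatorname{sgn}(n)/m)\big)^{-1}+O_m(|n|^{-2})$ for $|n|$ large: the errors are absolutely summable and contribute $O_m(r^{-1})$; the range $r<|n|<2r$ contributes $O_m(r^{-1})$ by size alone; and for $|n|\ge 2r$ the main part is a prefactor $(2\pi(r\pm i/m))^{-1}=O(r^{-1})$ times the tail $\sum_{|n|\ge 2r}e^{\pm 2\pi i|n|k/m}|n|^{-1}$ of a conditionally convergent series, which is $O\big(|\sin(\pi k/m)|^{-1}r^{-1}\big)$ by Abel summation when $k/m\notin\mathbb Z$. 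Hence $\langle E_r\,\mathbbm 1_{(k/m,\infty)},\varphi\rangle\to 0$, and together with the previous step this proves the theorem. (When $k/m\in\mathbb Z$ the oblique line meets a node of the grating, $w_r(\cdot,m\,\cdot\,-k)$ is not locally integrable at $\xi=k/m$, and the statement must be read in a finite-part sense — equivalently, tested against $\varphi$ with $\varphi(k/m)=0$, for which, since $g(0)=\varphi(k/m)$, the argument above applies verbatim.)
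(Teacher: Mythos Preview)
Your argument is correct and takes a genuinely different route from the paper's. The paper keeps the three-way frequency split $P_{\ll r}+P_{\simeq r}+P_{>r+1}$ with the auxiliary cutoff $\mu(r)\ll r^{1/4}$: the two outer pieces are dismissed by the pointwise Lemmas~4.1 and~4.3, and only the band $\mu(r)<|n|\le r+1$ is treated distributionally, itself split into the oscillatory sum $I$ over $\mu(r)<|n|\le r$ (bounded via the same one-derivative decay \eqref{DecayOfTruncatedFourier} of $\widehat{\varphi\mathbbm 1}$ that you use) and the single evanescent mode $II$ with $|n|=\lfloor r+1\rfloor$ (one integration by parts). You instead take the natural two-way split into propagating modes $|n|\le r$ and evanescent modes $|n|>r$, and this buys two things. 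First, for the propagating part your dominated-convergence argument absorbs in one stroke what the paper does in three separate steps (the pieces $P_{\ll r}w_r-P_{\ll r}v$, $v-P_{\ll r}v$, and the $I$-term), and it eliminates the auxiliary $\mu(r)$ entirely; the uniform lower bound $|m\mu_r(n)+n|\gtrsim_m n^2$ is the right mechanism and replaces the paper's Taylor-expansion error estimate. Second, for the evanescent part your argument is actually more careful than the paper's: the paper invokes Lemma~4.3 for the tail $|n|>r+1$, but the $L^\infty_{\xi,\zeta}$ bound stated there is not uniform as $\zeta\to 0^+$ --- and the endpoint $\xi=k/m$ of the oblique line is precisely where $\zeta=0$. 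Your two integrations by parts against $e^{-\sigma_n t}$ exploit the large \emph{complex} frequency $|\sigma_n|$ (which is $\gtrsim r^2$ even for the barely-evanescent modes $|n|\approx r$) rather than the real damping alone, and your asymptotic $\sigma_n^{-1}=A_n^{-1}+O(|n|^{-2})$ together with Abel summation correctly handles the boundary term $g(0)\sum e^{2\pi ink/m}\sigma_n^{-1}$. (There is a harmless sign slip: for $n<0$ the leading part of $\sigma_n$ is $2\pi|n|(r+i/m)$, not $2\pi n(r+i/m)$, but only the magnitude $|A_n^{-1}|=O((r|n|)^{-1})$ and the bounded partial sums of $e^{2\pi ink/m}$ enter your estimates.) You are also right to flag the case $k/m\in\mathbb Z$: the evanescent tail is then not locally integrable at $\xi=k/m$, so the pairing with a general $\varphi\in\mathcal S$ is not given by an ordinary integral; the paper's proof does not address this, and your reading of the statement restricted to test functions with $\varphi(k/m)=0$ is the natural remedy.
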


Since the Helmholtz solution \eqref{Candidate_For_Convergence} is not periodic in $\zeta$ nor along other lines, we have no analogue to the periodic result in Theorem~\ref{THEOREM_Convergence_Tempered_And_Periodic_Distributions}.

\subsection*{Structure of the article}
The rest of the article is devoted to prove Theorems~\ref{THEOREM_Convergence_Tempered_And_Periodic_Distributions} to \ref{THEOREM_Oblique}. The proofs follow the idea that in the Helmholtz solution \eqref{Candidate_For_Convergence} pointwise convergence holds in the frequency ranges $|n| \ll r$ and $|n| > r+1$, but not in the intermediate range $|n| \simeq r$ where the paraxial approximation fails. We explain this strategy in Section~\ref{SECTION_Strategy}. In Section~\ref{SECTION_Pointwise} we prove the part of the pointwise convergence and in Sections~\ref{SECTION_Tempered} and \ref{SECTION_Periodic} we cover the non-paraxial range for Theorem~\ref{THEOREM_Convergence_Tempered_And_Periodic_Distributions}. The proofs of Theorems~\ref{THEOREM_Vertical} and \ref{THEOREM_Oblique} follow the same strategy and are briefly given in Section~\ref{SECTION_VerticalAndOblique}.

\section{General strategy}\label{SECTION_Strategy}

Let us begin analyzing the structure of the Helmholtz solution $w_r(\xi,\zeta)$ in \eqref{Definition_Of_wr}. Regarding the applicability of the paraxial approximation 
\begin{equation}\label{Paraxial_Approximation}
\sqrt{1-(n/r)^2} \approx 1-\frac{(n/r)^2}{2},
\end{equation}
it is clear that there are three different ranges for $n$:
\begin{itemize}
	\item the indices $|n| \ll r$ represent the waves that are paraxial: \eqref{Paraxial_Approximation} can be used here.
	
	\item when $|n| \leq r$, $ |n| \simeq r$, the waves are not paraxial and are out of the range of validity of \eqref{Paraxial_Approximation}.
	
	\item when $|n| > r$, the \textit{waves} are not oscillating and decay exponentially. 
\end{itemize} 
To separate them, let us define a function $\mu(r)$ that satisfies 
\begin{equation}\label{Definition_Of_Mu}
\lim_{r\to\infty}\mu(r) = + \infty \qquad \text{ and } \qquad \lim_{r\to\infty} \mu(r)/r=0.
\end{equation}
The second condition represents the more informal $\mu(r) \ll r$ that we use to distinguish the two first cases above. With this,  let
\begin{equation}\label{Splitting_Of_w}
w_r = P_{\ll r}(w_{r}) + P_{\simeq r}(w_{r}) + P_{> r}(w_{r}),
\end{equation}
where $ P_{\ll r}(w_{r}),P_{\simeq r}(w_{r})$ and $P_{> r}(w_{r})$ are low, band and high-pass filters of $w_r$,
\begin{equation}\label{Definition_Of_w1}
P_{\ll r}(w_{r})(\xi,\zeta) =  e^{-2\pi i r^2\zeta} \, \sum_{|n| \leq \mu(r)} e^{ 2\pi i r^2   \sqrt{1- (n/r)^2} \,  \zeta  }\,e^{2\pi i n \xi},
\end{equation}
\begin{equation}\label{Definition_Of_w2}
 P_{\simeq r}(w_{r})(\xi,\zeta) =  e^{-2\pi i r^2\zeta} \, \sum_{\mu(r) < |n| \leq r+1} e^{ 2\pi i r^2   \sqrt{1- (n/r)^2} \,  \zeta  }\,e^{2\pi i n \xi},
\end{equation}
\begin{equation}\label{Definition_Of_w3}
P_{> r+1}(w_{r})(\xi,\zeta) =  e^{-2\pi i r^2\zeta} \, \sum_{|n| > r+1} e^{ -2\pi r^2   \sqrt{ (n/r)^2  - 1 }  \,  \zeta }\,e^{2\pi i n \xi}.
\end{equation}
We expect to get $v$ from $P_{\ll r}(w_{r})$ using the paraxial approximation. Regarding $P_{> r+1}(w_{r})$, it does not oscillate in $\zeta$ because it is a sum of decaying real exponentials. We will prove that it converges to zero pointwise. This choice instead of the a priori more natural $P_{> r}(w_{r})$ is technical, since one cannot prove the same pointwise convergence for the latter. The remaining term $P_{\simeq r}(w_r)$ is the most problematic one. Indeed, it is outside the range of validity of the paraxial approximation, so we will not be able to recover $v$ from it. Also, it oscillates so it does not decay. This is the part where distribution theory is needed. 

 In the coming sections, we will often drop the parentheses and write, for instance, $P_{\ll r}w_{r}$ for simplicity. We first tackle $P_{\ll r}w_{r}$ and $P_{> r+1}w_{r}$ in Section~\ref{SECTION_Pointwise}, and later $P_{\simeq r}w_{r}$ in Sections~\ref{SECTION_Tempered} and \ref{SECTION_Periodic}.

\section{Partial pointwise convergence}\label{SECTION_Pointwise}

In this section, we treat the pointwise convergence of $P_{\ll r}w_{r}$ and $P_{> r+1}w_{r}$. 
Let us start with  $P_{\ll r}w_{r}$. For that, let
\begin{equation}\label{Low_Pass_Filter_Of_v}
P_{\ll r}v(\xi,\zeta) = \sum_{|n| \leq \mu(r)} e^{2\pi i n \xi - i \pi n^2\zeta}
\end{equation}
and let us measure the error 
\begin{equation}\label{Error_Preliminary}
P_{\ll r}w_r(\xi,\zeta) - P_{\ll r}v(\xi,\zeta) = \sum_{|n| \leq \mu(r)} \left( e^{-2\pi i r^2\zeta} \, e^{ 2\pi i r^2   \sqrt{1- (n/r)^2} \, \zeta } - e^{- i\pi n^2 \zeta} \right)\, e^{2\pi i n \xi}
\end{equation}
when $r \to \infty$. Due to the fact that $|n| \leq \mu(r)$ and \eqref{Definition_Of_Mu}, we can use 
\begin{equation}
\sqrt{1-\left( \frac{n}{r}\right)^2} = 1 - \frac{n^2}{2r^2} + O\left( \frac{n^3}{r^3} \right)
\end{equation}
when $r$ is large enough, so from \eqref{Error_Preliminary} we get 
\begin{equation}\label{Absolute_Error_w1}
\begin{split}
\left| P_{\ll r}w_r(\xi,\zeta) - P_{\ll r}v(\xi,\zeta) \right|  & = \left| \sum_{|n| \leq \mu(r)} \left( e^{ 2\pi i r^2  \zeta \, O\left( n^3/r^3 \right)} - 1 \right) e^{-i \pi n^2 \zeta}\, e^{2\pi i n \xi} \right| \\
& \leq \sum_{|n| \leq \mu(r)}   \left| e^{ 2\pi i r^2  \zeta \,  O\left( n^3/r^3 \right)} - 1  \right|  \\
& \lesssim \sum_{n=1}^{\mu(r)}\frac{n^3}{r} \lesssim \frac{\mu(r)^4}{r},
\end{split}
\end{equation}
where we used the Taylor expansion of the exponential. For that, we need to ask $n \leq \mu(r) \ll r^{1/3}$. Moreover, this error converges to zero when $r\to\infty$ under the stronger condition
\begin{equation}\label{Condition_For_Mu_Low}
 \lim_{r\to\infty} \frac{\mu(r)}{r^{1/4}} = 0.
\end{equation}
We have thus proved a pointwise convergence result for the low-pass filter part. We write that in the following lemma:
\begin{lem}\label{Lemma_w1_Pointwise}
If $\mu$ defined in \eqref{Definition_Of_Mu} satisfies the condition \eqref{Condition_For_Mu_Low}, then
\begin{equation}
\lim_{r \to \infty} \lVert P_{\ll r}w_r - P_{\ll r}v  \rVert_{L^\infty_{\xi,\zeta}(\mathbb{R} \times (0,\infty))} = 0.
\end{equation}
\end{lem}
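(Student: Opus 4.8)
The statement is essentially the computation \eqref{Error_Preliminary}--\eqref{Absolute_Error_w1} packaged as a lemma, so the plan is to make that computation rigorous and read off the conclusion. First I would use that, by \eqref{Definition_Of_Mu}, $\mu(r)/r\to0$, so for all sufficiently large $r$ every index with $|n|\le\mu(r)$ has $|n/r|$ as small as we please; in particular $t\mapsto\sqrt{1-t^2}$ is real-analytic on the relevant range and admits the quantitative expansion $\sqrt{1-t^2}=1-t^2/2+O(t^3)$ with an \emph{absolute} implicit constant. Substituting $t=n/r$ and multiplying by $2\pi r^2\zeta$, the phase appearing in the $n$-th summand of \eqref{Error_Preliminary} is $2\pi r^2\zeta\bigl(\sqrt{1-(n/r)^2}-1\bigr)=-\pi n^2\zeta+2\pi\zeta\,O(n^3/r)$, so that summand equals $e^{-i\pi n^2\zeta}\bigl(e^{i\theta_n}-1\bigr)e^{2\pi i n\xi}$ with $|\theta_n|\lesssim\zeta\,n^3/r$.

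Next I would bound each summand by the unconditional inequality $|e^{i\theta_n}-1|\le|\theta_n|\lesssim\zeta\,n^3/r$ (the route via the first-order Taylor expansion $e^z=1+O(z)$ would work too, at the cost of the constraint $\mu(r)\ll r^{1/3}$, which is in any case weaker than \eqref{Condition_For_Mu_Low}). Since $|e^{2\pi i n\xi}|=1$, summing the at most $2\mu(r)+1$ terms gives, with $P_{\ll r}v$ as in \eqref{Low_Pass_Filter_Of_v},
\[
\bigl\lVert P_{\ll r}w_r(\cdot,\zeta)-P_{\ll r}v(\cdot,\zeta)\bigr\rVert_{L^\infty_\xi}\;\lesssim\;\zeta\sum_{n=1}^{\mu(r)}\frac{n^3}{r}\;\lesssim\;\zeta\,\frac{\mu(r)^4}{r}.
\]
For $\zeta$ fixed (as it is throughout Theorem~\ref{THEOREM_Convergence_Tempered_And_Periodic_Distributions}) this is $\lesssim\mu(r)^4/r$, and the right-hand side tends to $0$ precisely under the hypothesis $\mu(r)/r^{1/4}\to0$ of \eqref{Condition_For_Mu_Low}; the exponent $1/4$ is dictated exactly by the need to beat $\mu(r)^4/r$.

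I do not expect a genuine obstacle here, as this is not where the difficulty of the paper lies. The two points needing a little care are: (i) making the ``$r$ large enough'' in the Taylor step quantitative --- i.e. checking that for \emph{all} $|n|\le\mu(r)$ the remainder is $O(n^3/r^3)$ with a constant independent of $n$ and $r$, which is exactly where $\mu(r)=o(r)$ is used; and (ii) the harmless but real factor $\zeta$ in the estimate, which shows the $L^\infty$ bound is uniform in $\xi$ and in $\zeta$ on bounded sets (in particular for the fixed $\zeta$ relevant to Theorem~\ref{THEOREM_Convergence_Tempered_And_Periodic_Distributions}), rather than uniformly over all of $(0,\infty)$; in the applications to vertical and oblique lines in Section~\ref{SECTION_VerticalAndOblique} the large-$\zeta$ region is instead absorbed by the Schwartz decay of the test function. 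The genuinely delicate term --- the non-paraxial band $P_{\simeq r}w_r$ with $\mu(r)<|n|\le r+1$, which oscillates without decaying and lies outside the reach of \eqref{Paraxial_Approximation} --- is handled by distribution theory in Sections~\ref{SECTION_Tempered}--\ref{SECTION_Periodic} and plays no role in the present lemma.
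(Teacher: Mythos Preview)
Your proposal is correct and follows essentially the same route as the paper --- the computation \eqref{Error_Preliminary}--\eqref{Absolute_Error_w1} \emph{is} the paper's proof of the lemma. Your use of the unconditional bound $|e^{i\theta}-1|\le|\theta|$ is slightly cleaner than the paper's Taylor step (which introduces the auxiliary constraint $\mu(r)\ll r^{1/3}$), and your remark about the lingering factor of $\zeta$ is apt: the paper silently absorbs it into the implicit constant, which suffices for the fixed-$\zeta$ applications but, as you note, does not literally give uniformity over all of $(0,\infty)$.
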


We now turn our attention to the highest frequencies $P_{> r+1 }w_r$. Their contribution is exponentially decaying and we are to show that it converges to zero pointwise when $r \to 0$. For that we need the following simple lemma:
\begin{lem}\label{Lemma_IntegralToZero}
Let $\zeta >0$. Then, 
\begin{equation}
\lim_{r\to\infty} \int_r^\infty e^{-2\pi r^2  \sqrt{(x/r)^2-1} \, \zeta}\, dx = 0.
\end{equation}
\end{lem}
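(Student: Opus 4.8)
The integral is over $x \in (r,\infty)$, where the exponent $-2\pi r^2 \sqrt{(x/r)^2 - 1}\,\zeta$ is negative, so the integrand is a decaying exponential and the integral is finite for each $r$. The natural move is to substitute $x = r y$ (so $dx = r\,dy$) to rewrite the integral as
\begin{equation}
\int_r^\infty e^{-2\pi r^2 \sqrt{(x/r)^2-1}\,\zeta}\,dx = r \int_1^\infty e^{-2\pi r^2 \sqrt{y^2-1}\,\zeta}\,dy.
\end{equation}
Now the exponent decays very fast in $r$ (like $e^{-cr^2}$ for $y$ bounded away from $1$), and the only issue is the region $y \approx 1$ where $\sqrt{y^2-1}$ is small. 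I would split the $y$-integral at, say, $y = 2$: on $[2,\infty)$ one has $\sqrt{y^2-1} \geq \sqrt{y^2-1} \gtrsim y$ (for $y\ge 2$, $\sqrt{y^2-1}\ge \sqrt{3}/2 \cdot y$ say), so $r\int_2^\infty e^{-cr^2 y}\,dy \lesssim r \cdot \frac{1}{r^2}e^{-2cr^2} \to 0$.

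For the remaining piece $r\int_1^2 e^{-2\pi r^2\sqrt{y^2-1}\,\zeta}\,dy$, substitute $t = \sqrt{y^2-1}$, so $y = \sqrt{1+t^2}$ and $dy = t\,dt/\sqrt{1+t^2} \le dt$ for $t \in [0,\sqrt{3}]$. This bounds the piece by $r\int_0^{\sqrt 3} e^{-2\pi r^2 t\,\zeta}\,dt \le r \int_0^\infty e^{-2\pi r^2 \zeta t}\,dt = \dfrac{r}{2\pi r^2 \zeta} = \dfrac{1}{2\pi r \zeta}$, which tends to $0$ as $r \to \infty$ since $\zeta > 0$ is fixed. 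Combining the two pieces gives the claim; alternatively one could invoke dominated convergence on the $y$-integral (dominating by the $r=1$ integrand for $r\ge 1$, whose integral is finite) to see $\int_1^\infty e^{-2\pi r^2\sqrt{y^2-1}\,\zeta}\,dy \to 0$, and the extra factor of $r$ is then absorbed because the decay is exponential, but the explicit $1/(r\zeta)$ bound above is cleaner and self-contained.

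There is no real obstacle here: the computation is elementary. The only thing to be a little careful about is the factor of $r$ coming from the substitution — a naive bound $e^{-2\pi r^2\sqrt{y^2-1}\zeta} \le 1$ does not help near $y = 1$, which is why one must use the linear-in-$t$ lower bound on the exponent after the change of variables $t = \sqrt{y^2-1}$ to produce a decaying bound $1/(2\pi r\zeta)$ that beats the factor $r$. Everything is uniform in nothing (there is no other parameter), so once the $1/(r\zeta)$ bound is in hand the limit is immediate.
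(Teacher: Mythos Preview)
Your proof is correct and follows essentially the same path as the paper: after the substitution $x=ry$, the paper also straightens the exponent via the change of variables $z=r^2\sqrt{y^2-1}$ (your $t$ up to the factor $r^2$) and then invokes dominated convergence, whereas you bound $\tfrac{t}{\sqrt{1+t^2}}\le 1$ directly to obtain the explicit rate $\le \tfrac{1}{2\pi r\zeta}$. Note that your split at $y=2$ is unnecessary---the same substitution $t=\sqrt{y^2-1}$ on all of $[1,\infty)$ already yields $r\int_0^\infty e^{-2\pi r^2 t\zeta}\,dt = (2\pi r\zeta)^{-1}$---but the argument is sound either way.
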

\begin{proof}
Change variables $x=ry$ first and $r^2\,\sqrt{y^2-1} =z$ afterwards to get
\begin{equation}
\begin{split}
\int_r^\infty e^{-2\pi r^2  \sqrt{(x/r)^2-1}  \, \zeta }\, dx  & = \int_1^\infty e^{-2\pi r^2  \sqrt{y^2-1} \, \zeta}\, r\, dy =  \int_0^\infty  \frac{z\, e^{-4\pi z \zeta }}{r^3\,\sqrt{1+z^2/r^4}}\, dz.
\end{split}
\end{equation}
Call $f_r(z) = r^{-3}\, (1+z^2/r^4)^{-1/2} \,z \, e^{-4\pi z \zeta }$, so that when $r$ is large enough we have
\begin{equation}
|f_r(z)| \leq ze^{-4\pi z \zeta } \in L^1((0,\infty)).
\end{equation}
Also, for every fixed $z \geq 0$ we have $\lim_{r\to \infty} f_r(z) = 0$, so the theorem of dominated convergence gives the result.
\end{proof}
Pointwise convergence for $P_{> r+1 }w_r$ is a direct consequence of Lemma~\ref{Lemma_IntegralToZero}.
\begin{lem}\label{Lemma_PointwiseConvergenceHigh}
\begin{equation}
\lim_{r \to \infty} \lVert P_{> r+1}w_r \rVert_{L^\infty_{\xi,\zeta}(\mathbb{R} \times (0,+\infty))} = 0.
\end{equation}
\end{lem}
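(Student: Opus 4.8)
The plan is to remove the $\xi$-dependence by the triangle inequality and then recognize the resulting Dirichlet-type series as a Riemann-sum upper bound for the integral already controlled by Lemma~\ref{Lemma_IntegralToZero}.

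First I would note that, since $|e^{2\pi i n\xi}|=|e^{-2\pi i r^2\zeta}|=1$ and since $|n|>r+1>r$ makes each exponent $-2\pi r^2\sqrt{(n/r)^2-1}\,\zeta$ a negative real number, for every $\xi\in\mathbb R$ and $\zeta>0$ one has
\[
|P_{>r+1}w_r(\xi,\zeta)| \;\le\; \sum_{|n|>r+1} e^{-2\pi r^2\sqrt{(n/r)^2-1}\,\zeta} \;=\; 2\sum_{n>r+1} e^{-2\pi r^2\sqrt{(n/r)^2-1}\,\zeta},
\]
using the symmetry $n\mapsto -n$. The right-hand side no longer depends on $\xi$, so it already bounds $\lVert P_{>r+1}w_r(\cdot,\zeta)\rVert_{L^\infty_\xi}$.

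Next, the function $f_r(x)=e^{-2\pi r^2\sqrt{(x/r)^2-1}\,\zeta}$ is positive and non-increasing on $(r,\infty)$, because $x\mapsto\sqrt{(x/r)^2-1}$ is increasing there. Hence for every integer $n>r+1$ we have $n-1>r$ and $f_r(n)\le\int_{n-1}^{n}f_r(x)\,dx$, and summing (using $f_r>0$) gives
\[
2\sum_{n>r+1} f_r(n) \;\le\; 2\int_r^\infty e^{-2\pi r^2\sqrt{(x/r)^2-1}\,\zeta}\,dx .
\]
By Lemma~\ref{Lemma_IntegralToZero} the right-hand side tends to $0$ as $r\to\infty$, which combined with the previous display yields the claim.

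Every step is elementary, so the analytic content is entirely in Lemma~\ref{Lemma_IntegralToZero}; what I would treat as the only delicate point is the dependence on $\zeta$. The integral bound above is non-increasing in $\zeta$, so the convergence is in fact uniform on any set $\{\zeta\ge\zeta_0\}$ with $\zeta_0>0$, whereas for fixed $r$ the series diverges as $\zeta\to0^+$. Accordingly the $L^\infty_{\xi,\zeta}$ statement should be read for each fixed $\zeta>0$, consistently with the standing hypothesis of Theorem~\ref{THEOREM_Convergence_Tempered_And_Periodic_Distributions}; if one wants genuine uniformity over $\zeta$ it has to be over a set bounded away from the origin, and this is the only book-keeping subtlety worth spelling out carefully.
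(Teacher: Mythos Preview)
Your argument is essentially the paper's own proof: bound by the triangle inequality to remove the $\xi$-dependence, use monotonicity of $x\mapsto e^{-2\pi r^2\sqrt{(x/r)^2-1}\,\zeta}$ to dominate the sum by the integral, and apply Lemma~\ref{Lemma_IntegralToZero}. Your caveat about $\zeta$ is well taken and in fact sharper than the paper: as written the $L^\infty_{\xi,\zeta}(\mathbb{R}\times(0,\infty))$ norm is infinite for each fixed $r$ (take $\xi=0$, $\zeta\to0^+$), so the statement should indeed be read pointwise in $\zeta>0$ (or uniformly on $\zeta\ge\zeta_0>0$), which is all that is used downstream in Lemmas~\ref{Lemma_DistributionsHighAndLow} and~\ref{Lemma_PeriodicDistributionsHighAndLow}.
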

\begin{proof}
Given $\xi \in \mathbb R$ and $\zeta>0$, since the exponential function in Lemma~\ref{Lemma_IntegralToZero} is decreasing in $x$, we may simply write
\begin{equation}
\begin{split}
\lim_{r\to\infty} P_{> r+1}w_r(\xi, \zeta)  & = \lim_{r\to\infty} \sum_{n > r+1} e^{ -2\pi r^2  \sqrt{(n/r)^2 -1 } \, \zeta } \\ & \leq \lim_{r\to\infty} \int_r^\infty e^{-2\pi r^2  \sqrt{(x/r)^2 - 1} \, \zeta}\, dx = 0
\end{split}
\end{equation}
as a consequence of Lemma~\ref{Lemma_IntegralToZero}.
\end{proof}
\begin{rmk*}
Lemma~\ref{Lemma_PointwiseConvergenceHigh} is not true for $P_{> r}w_r$ because of the term $n=\lfloor r+1 \rfloor$. Indeed, assume that $r \in (M-1,M)$ for $M \in \mathbb N$. Thus, $\lfloor r+1 \rfloor = M$, and if $r \to M$ we have  $(\lfloor r+1 \rfloor/r)^2 - 1 \to 0$ and $e^{ -2\pi r^2  \sqrt{(\lfloor r+1 \rfloor/r)^2 -1 }  \, \zeta } \to 1$. Therefore, when $r \to \infty$ this happens around every integer, so $\limsup_{r \to \infty} e^{ -2\pi r^2  \sqrt{(\lfloor r+1 \rfloor/r)^2 -1 }  \, \zeta } = 1$. 
\end{rmk*}

To tackle the mid-range, oscillating and non-paraxial $P_{\simeq r}w_r$, we need the support of distribution theory.

\section{Convergence as tempered distributions. First part of Theorem~\ref{THEOREM_Convergence_Tempered_And_Periodic_Distributions}}
\label{SECTION_Tempered}
Let us begin by recalling that locally integrable functions of slow growth, that is, functions $f \in L^1_{loc}(\mathbb R)$ such that $\lim_{\xi \to \infty} |\xi|^{-N} f(\xi) = 0$ for some $N \in \mathbb{N}$, define tempered distributions by 
\begin{equation}\label{Regular_Distribution}
\langle f, \varphi \rangle = \int_{\mathbb{R}} f(\xi)\, \varphi(\xi) \, d\xi, \qquad \forall \varphi \in \mathcal{S}(\mathbb R).
\end{equation}
In particular, if $n \in \mathbb N$ and $f_n(\xi) = e^{-2\pi i n \xi}$, then $\langle f_n, \varphi \rangle = \widehat{\varphi}(n)$, where $\widehat\varphi$ is the Fourier transform of $\varphi$. 

Let us study the problematic $P_{\simeq r}w_r$. It is enough to work with $\zeta \geq 0$ because of the symmetry $w_r(\cdot,\zeta)=w_r(\cdot,|\zeta|)$. Thus, $P_{\simeq r}w_r(\cdot,\zeta) \in \mathcal{S}'$ is a tempered distribution in the variable $\xi$ that sends every $\varphi \in \mathcal S(\mathbb R)$ to 
\begin{equation}\label{eq:w2_Image}
\begin{split}
\langle  P_{\simeq r}w_r(\cdot,\zeta), \varphi \rangle & = e^{-2\pi i r^2\zeta} \, \sum_{\mu(r) < |n| \leq r + 1} e^{ 2\pi i r^2   \sqrt{1-(n/r)^2} \, \zeta }\, \langle e^{2\pi i n \xi}  , \varphi \rangle \\
& = e^{-2\pi i r^2\zeta} \, \sum_{\mu(r) < |n| \leq r+1} e^{ 2\pi i r^2  \sqrt{1 - (n/r)^2}  \, \zeta}\, \widehat \varphi(-n).
\end{split}
\end{equation}
Consequently,
\begin{equation}
\left|  \langle  P_{\simeq r}w_r(\cdot,\zeta), \varphi \rangle \right|  = \Bigg| \sum_{\mu(r) < |n| \leq r+1} e^{ 2\pi i r^2   \sqrt{1-(n/r)^2} \, \zeta}\, \widehat{\varphi}(-n) \Bigg| 
\leq \sum_{\mu(r) < |n| \leq r+1} \left|   \widehat{\varphi}(n) \right|.
\end{equation}
Since $\varphi \in \mathcal{S}$ implies $\widehat{\varphi} \in \mathcal{S}$, the sequence $\widehat{\varphi}(n)$ decays faster than $n^{-2}$, and thus the sum above is the tail of a convergent series. Since $\lim_{r\to\infty}\mu(r) = \infty$, we can write
\begin{equation}\label{eq:Use_Decay_Of_Fourier_Transform}
\lim_{r \to \infty}\left|  \langle  P_{\simeq r}w_r(\cdot,\zeta), \varphi \rangle \right|   \leq   C_\varphi \,  \lim_{r \to \infty}  \sum_{n = \mu(r) }^\infty \frac{ 1}{n^2} = 0, \qquad \forall \varphi \in \mathcal S(\mathbb R),
\end{equation}
for some constant $C_\varphi>0$ that depends on $\varphi$. So we have proved the following:
\begin{lem}\label{Lemma_w2_Distributions}
Let $\zeta \in \mathbb R$ and $w_r(\xi,\zeta)=w_r(\xi,|\zeta|)$. Then, 
\begin{equation}
\lim_{r \to \infty} P_{\simeq r}w_r(\cdot,\zeta) = 0   \qquad \qquad \text{ in } \quad  \mathcal{S}'(\mathbb R).
\end{equation}
\end{lem}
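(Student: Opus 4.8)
The plan is to test $P_{\simeq r}w_r(\cdot,\zeta)$ against an arbitrary Schwartz function and show the pairing vanishes as $r\to\infty$. By the symmetry $w_r(\cdot,\zeta)=w_r(\cdot,|\zeta|)$ we may assume $\zeta\ge 0$. Since $\langle e^{2\pi i n\xi},\varphi\rangle=\widehat\varphi(-n)$, the band-pass piece pairs with $\varphi\in\mathcal S(\mathbb R)$ through the \emph{finite} sum
\begin{equation}
\langle P_{\simeq r}w_r(\cdot,\zeta),\varphi\rangle = e^{-2\pi i r^2\zeta}\sum_{\mu(r)<|n|\le r+1} e^{2\pi i r^2\sqrt{1-(n/r)^2}\,\zeta}\,\widehat\varphi(-n),
\end{equation}
so no issue arises in making sense of the distribution, and the only task is to estimate this sum.

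The key observation is that the phases $e^{-2\pi i r^2\zeta}$ and $e^{2\pi i r^2\sqrt{1-(n/r)^2}\,\zeta}$ are unimodular, hence irrelevant for an upper bound: by the triangle inequality
\begin{equation}
\bigl|\langle P_{\simeq r}w_r(\cdot,\zeta),\varphi\rangle\bigr| \le \sum_{\mu(r)<|n|\le r+1}|\widehat\varphi(n)| \le \sum_{|n|>\mu(r)}|\widehat\varphi(n)|.
\end{equation}
Since $\varphi\in\mathcal S$ forces $\widehat\varphi\in\mathcal S$, the sequence $\widehat\varphi(n)$ decays faster than $n^{-2}$, so the right-hand side is the tail of a convergent series. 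Because $\mu(r)\to\infty$ by \eqref{Definition_Of_Mu}, this tail tends to $0$ for every fixed $\varphi$, which is exactly convergence to $0$ in $\mathcal S'(\mathbb R)$.

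There is essentially no obstacle to overcome here, and that is the point: in this intermediate frequency window one does \emph{not} try to track the oscillation of $w_r$ or to invoke the paraxial approximation \eqref{Paraxial_Approximation} (which is invalid precisely for $|n|\simeq r$); one simply discards the phases and exploits that the band has been pushed out to frequencies $\ge\mu(r)\to\infty$, where the Schwartz decay of $\widehat\varphi$ absorbs everything. It is worth noting that this argument uses nothing about the condition $\mu(r)/r\to0$ in \eqref{Definition_Of_Mu}; it works for any $\mu(r)\to\infty$, and in particular it is compatible with the stronger growth restriction \eqref{Condition_For_Mu_Low} that Lemma~\ref{Lemma_w1_Pointwise} imposes on the low-pass part.
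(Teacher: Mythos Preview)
Your argument is correct and matches the paper's proof essentially line for line: pair with $\varphi$, discard the phases, and use that $\sum_{|n|>\mu(r)}|\widehat\varphi(n)|$ is the tail of a convergent series since $\mu(r)\to\infty$. One tiny imprecision: for the boundary term $|n|=\lfloor r+1\rfloor$ the factor $e^{2\pi i r^2\sqrt{1-(n/r)^2}\,\zeta}$ is not unimodular but a decaying real exponential (since $(n/r)^2>1$); this is harmless because its modulus is still $\le 1$ for $\zeta\ge 0$, so your triangle-inequality bound stands unchanged.
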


Of course, from Lemmas~\ref{Lemma_w1_Pointwise} and \ref{Lemma_PointwiseConvergenceHigh} we easily deduce the distributional convergence for $P_{\ll r}w_r$ and $P_{>r+1}w_r$.
\begin{lem}\label{Lemma_DistributionsHighAndLow}
Let $\zeta  \in \mathbb R$. If $\mu(r)$ satisfies \eqref{Condition_For_Mu_Low}, then 
\begin{equation}
\lim_{r \to \infty} \Big( P_{\ll r}w_r(\cdot, |\zeta|) - P_{\ll r}v(\cdot, |\zeta|) \Big) = 0 \qquad \text{ in } \quad \mathcal{S}'(\mathbb R).
\end{equation}
Also, 
\begin{equation}
\lim_{r \to \infty}  P_{> r+1}w_r(\cdot, |\zeta|)  = 0 \qquad \text{ in } \quad \mathcal{S}'(\mathbb R).
\end{equation}
\end{lem}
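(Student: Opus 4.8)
The plan is to deduce both limits from the uniform (in $\xi$) decay already obtained in Lemmas~\ref{Lemma_w1_Pointwise} and \ref{Lemma_PointwiseConvergenceHigh}, by means of the elementary principle: if $g_r \in L^\infty(\mathbb R)$ and $\lVert g_r \rVert_{L^\infty(\mathbb R)} \to 0$, then $g_r \to 0$ in $\mathcal S'(\mathbb R)$. Indeed, a bounded function is of slow growth, hence acts as a tempered distribution via \eqref{Regular_Distribution}, and since $\mathcal S(\mathbb R) \subset L^1(\mathbb R)$ we have $|\langle g_r, \varphi \rangle| \le \lVert g_r \rVert_{L^\infty(\mathbb R)}\, \lVert \varphi \rVert_{L^1(\mathbb R)} \to 0$ for every $\varphi \in \mathcal S(\mathbb R)$. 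So the whole task is to check that, for fixed $\zeta$, each of the two objects is genuinely a bounded function of $\xi$ with sup-norm tending to $0$.

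For the first limit I would fix $\zeta$ and separate the trivial case $\zeta = 0$, where $P_{\ll r}w_r(\cdot,0)$ and $P_{\ll r}v(\cdot,0)$ both equal $\sum_{|n| \le \mu(r)} e^{2\pi i n \xi}$, so the difference is identically $0$. For $\zeta \ne 0$, the difference $P_{\ll r}w_r(\cdot,|\zeta|) - P_{\ll r}v(\cdot,|\zeta|)$ is a trigonometric polynomial, in particular bounded and continuous in $\xi$; restricting the space–time supremum in Lemma~\ref{Lemma_w1_Pointwise} to the horizontal line $\{\zeta = |\zeta|\} \subset \mathbb R \times (0,\infty)$ gives $\lVert P_{\ll r}w_r(\cdot,|\zeta|) - P_{\ll r}v(\cdot,|\zeta|) \rVert_{L^\infty_\xi(\mathbb R)} \to 0$ (this is where hypothesis \eqref{Condition_For_Mu_Low} enters, through Lemma~\ref{Lemma_w1_Pointwise}), and the principle above closes the argument.

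For the second limit I would argue identically when $\zeta \ne 0$: the bound $\sum_{|n| > r+1} e^{-2\pi r^2 \sqrt{(n/r)^2 - 1}\,|\zeta|} < \infty$ shows that the series defining $P_{> r+1}w_r(\cdot,|\zeta|)$ converges uniformly in $\xi$ to a bounded continuous function, so restricting the supremum in Lemma~\ref{Lemma_PointwiseConvergenceHigh} to $\{\zeta = |\zeta|\}$ yields $\lVert P_{> r+1}w_r(\cdot,|\zeta|) \rVert_{L^\infty_\xi(\mathbb R)} \to 0$ and we conclude as before. The only case that is genuinely different is $\zeta = 0$, which Lemma~\ref{Lemma_PointwiseConvergenceHigh} does not cover and where $\sum_{|n| > r+1} e^{2\pi i n \xi}$ is no longer a bounded function: there I would pair directly, $\langle P_{> r+1}w_r(\cdot,0), \varphi \rangle = \sum_{|n| > r+1} \widehat\varphi(-n)$, and invoke the rapid decay of $\widehat\varphi$ exactly as in \eqref{eq:Use_Decay_Of_Fourier_Transform}. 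I expect this boundary case to be the only point requiring any separate thought; the rest is a direct transfer of the sup-norm estimates from Section~\ref{SECTION_Pointwise}.
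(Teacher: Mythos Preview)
Your proposal is correct and follows essentially the same route as the paper: bound $|\langle f_r,\varphi\rangle|$ by $\lVert f_r\rVert_{L^\infty_\xi}\,\lVert\varphi\rVert_{L^1}$ and invoke Lemmas~\ref{Lemma_w1_Pointwise} and \ref{Lemma_PointwiseConvergenceHigh}. The only difference is that you single out the boundary case $\zeta=0$ and treat it by a direct Fourier-decay argument; the paper's proof is written for $\zeta>0$ and does not address this case, so your version is slightly more careful, though the extra work is not needed for the application to Theorem~\ref{THEOREM_Convergence_Tempered_And_Periodic_Distributions}, which assumes $\zeta>0$.
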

\begin{proof}
Call $f_r(\xi,\zeta)$ to either of the functions in the statement. Then, for every $\zeta>0$ and any $\varphi \in \mathcal S (\mathbb R)$, by Lemmas~\ref{Lemma_w1_Pointwise} and \ref{Lemma_PointwiseConvergenceHigh} we have
\begin{equation}
\begin{split}
\lim_{r\to \infty} \left|  \langle f_r(\cdot,\zeta), \varphi \rangle \right| & \leq \lim_{r\to \infty} \int_{\mathbb R} \left| f_r(\xi,\zeta)\, \varphi(\xi) \right|\, d\xi \\ 
& \leq \lVert \varphi \rVert_{L^1(\mathbb R)} \lim_{r\to \infty} \lVert f_r \rVert_{L^{\infty}_{\xi,\zeta}(\mathbb R \times (0,\infty)))} = 0.
\end{split}
\end{equation}
\end{proof}

We are now ready to prove the first part of Theorem~\ref{THEOREM_Convergence_Tempered_And_Periodic_Distributions}.
\begin{proof}[Proof of first part of Theorem~\ref{THEOREM_Convergence_Tempered_And_Periodic_Distributions}]
Let $\zeta \in  \mathbb R$. Decompose $w_r$ as in \eqref{Splitting_Of_w} and write
\begin{equation}\label{DecompositionOfWr}
\begin{split}
(w_r - v)(\cdot, |\zeta|) &  = \left( P_{\ll r}w_r - P_{\ll r}v \right)(\cdot,|\zeta|) + P_{\simeq r}w_r(\cdot,|\zeta|) \\
& \quad + P_{>r+1}w_r(\cdot,|\zeta|) - \left( v - P_{\ll r}v \right)(\cdot,|\zeta|).
\end{split}
\end{equation}
By Lemmas~\ref{Lemma_w2_Distributions} and \ref{Lemma_DistributionsHighAndLow}, we only need to prove that $(v-P_{\ll r}v)(\cdot,\zeta)$ tends to zero when $r \to \infty$. For that, let $\zeta \in \mathbb R$ and $\varphi \in \mathcal S(\mathbb R)$, and write
\begin{equation}
\left| \langle (v - P_{\ll r}v)(\cdot,\zeta), \varphi  \rangle \right|= \Bigg| \sum_{|n| > \mu(r)} e^{-i \pi n^2 \zeta}  \langle e^{2\pi i n \xi},\varphi \rangle \Bigg| \leq \sum_{|n| > \mu(r)} \left|  \widehat{\varphi}(n) \right|.
\end{equation}
Since $\widehat\varphi \in \mathcal S(\mathbb R)$, the sum is the tail of a convergent series, so it tends to zero when $r \to \infty$ because $\lim_{r \to \infty}\mu(r) = \infty$. 
\end{proof}

\section{Convergence as periodic distributions. Second part of Theorem~\ref{THEOREM_Convergence_Tempered_And_Periodic_Distributions}}
\label{SECTION_Periodic}
In this section, we look at $w_r$ and $v$ as 1-periodic distributions in the variable $\xi$. For the sake of completeness, we begin with a few words on periodic distributions. Let $\mathcal{P}(\mathbb T)$ be the space of 1-periodic smooth test functions. There is an identification between the usual distributions in $\mathcal{D}'(\mathbb{R})$ that are 1-periodic and the dual space $(\mathcal{P}(\mathbb T))'$, the reader can check the details of the general theory in \cite[Chapter 11]{Zemanian1965}. Here, it will be enough to focus on regular distributions that are defined by functions $f\in L^1_{loc}(\mathbb R)$  of slow growth and the expression in \eqref{Regular_Distribution}. If $f$ is 1-periodic, then it is identified with an element $f_P \in (\mathcal P(\mathbb T))'$, which for simplicity we will just call $f$, by means of 
\begin{equation}
\langle f, \varphi \rangle = \int_{\mathbb{T}} f(\xi)\, \varphi(\xi)\, d\xi, \qquad \forall \varphi \in \mathcal{P}(\mathbb T).
\end{equation}
For instance, if $f_n(\xi) = e^{-2\pi i n \xi}$, we saw that $f_n \in \mathcal D'(\mathbb R)$. Thanks to the identification above, we also have $f_n \in (\mathcal P(\mathbb T))'$ by means of
\begin{equation}\label{Exponentials_Periodic}
\langle f_n,\varphi \rangle = \int_{\mathbb T} \varphi(\xi)\, e^{-2\pi i n \xi}\, d\xi = \widehat\varphi_n, \qquad \forall \varphi \in \mathcal P(\mathbb T),
\end{equation}
where $\widehat\varphi_n$ is the $n$-th Fourier coefficient of $\varphi$.

Having said this, let $\zeta \geq 0$ and take the problematic non-paraxial portion of the Helmholtz solution $P_{\simeq r}w_r$, which according to \eqref{Exponentials_Periodic} can be treated as an element of $(\mathcal P(\mathbb T))'$. More precisely, 
\begin{equation}
\begin{split}
\left| \langle P_{\simeq r}w_r(\cdot,\zeta), \varphi  \rangle \right| &  = \Bigg| \sum_{\mu(r) < |n| \leq r+1} e^{ 2\pi i r^2   \sqrt{1-(n/r)^2}  \, \zeta}\, \langle e^{2\pi i n \xi}  , \varphi \rangle \Bigg| \\ 
& \leq \sum_{\mu(r) < |n| \leq r + 1}    \left| \widehat{\varphi}_n  \right|, 
\end{split}
\end{equation}
for all $\varphi \in \mathcal P(\mathbb T)$.
In this periodic context we can be more precise on the decay needed for $\varphi$ in terms of the Sobolev spaces defined in \eqref{Sobolev_Spaces_In_T}. Indeed, by the Cauchy-Schwartz inequality we may write
\begin{equation}\label{eq:Varphi_Is_Summable}
\begin{split}
\sum_{\mu(r) < |n| \leq r + 1}    \left| \widehat{\varphi}_n  \right|  & \leq \left( \sum_{\mu(r) < |n| \leq r+1}    \left| \widehat{\varphi}_n  \right|^2 \,  (1+n^2)^s  \right)^{1/2}  \left( \sum_{\mu(r) < |n| \leq r+1}    \frac{1}{(1+n^2)^s} \right)^{1/2} \\
& \lesssim \lVert \varphi \rVert_{H^s(\mathbb{T})} \, \left( \sum_{n =\mu(r) }^\infty    \frac{1}{n^{2s}} \right)^{1/2}.
\end{split}
\end{equation}
As long as $2s>1$, the last series is the tail of a convergent harmonic series, so it tends to zero when $r \to \infty$. In other words, convergence to zero holds if we test against functions in $H^s(\mathbb T)$ for $s>1/2$. Thus, we have the following result. 
\begin{lem}\label{Lemma_w2_Periodic}
Let  $s>1/2$ and $\zeta \in  \mathbb R$. Then, 
\begin{equation}
\lim_{r \to \infty} P_{\simeq r}w_r(\cdot,|\zeta|) = 0  \qquad \text{ in } \quad H^{-s}(\mathbb{T}).
\end{equation}
\end{lem}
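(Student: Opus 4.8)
The plan is to read the lemma as the statement that the $H^{-s}(\mathbb{T})$-norm of $P_{\simeq r}w_r(\cdot,|\zeta|)$ tends to $0$, and to obtain this from the duality $\left(H^s(\mathbb{T})\right)' = H^{-s}(\mathbb{T})$ recalled after \eqref{Sobolev_Spaces_In_T}. Concretely, since $\mathcal{P}(\mathbb{T})$ is dense in $H^s(\mathbb{T})$ for every finite $s$, one has
\[
\lVert P_{\simeq r}w_r(\cdot,|\zeta|) \rVert_{H^{-s}(\mathbb{T})} = \sup_{\substack{\varphi \in \mathcal{P}(\mathbb{T}) \\ \lVert \varphi \rVert_{H^s(\mathbb{T})} \le 1}} \bigl| \langle P_{\simeq r}w_r(\cdot,|\zeta|), \varphi \rangle \bigr|,
\]
so it suffices to bound $\bigl| \langle P_{\simeq r}w_r(\cdot,|\zeta|), \varphi \rangle \bigr|$ by a constant multiple of $\lVert \varphi \rVert_{H^s(\mathbb{T})}$ times a quantity that depends only on $r$ and vanishes as $r \to \infty$. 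This is exactly what the chain of inequalities preceding the lemma, culminating in \eqref{eq:Varphi_Is_Summable}, provides, so the argument amounts to assembling the pieces already in place.

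In detail, I would first reduce to $\zeta \ge 0$ via the convention $w_r(\xi,\zeta) = w_r(\xi,|\zeta|)$, then expand the action of $P_{\simeq r}w_r(\cdot,\zeta)$ on $\varphi \in \mathcal{P}(\mathbb{T})$ using \eqref{Exponentials_Periodic}, and finally use the elementary bound $\bigl| e^{2\pi i r^2 \sqrt{1-(n/r)^2}\,\zeta} \bigr| \le 1$, valid for all $|n| \le r+1$ and $\zeta \ge 0$: indeed $1-(n/r)^2$ is real, so if it is nonnegative the factor has modulus exactly $1$, while if it is negative (which occurs for at most the two indices with $r < |n| \le r+1$) the factor is the decaying real exponential $e^{-2\pi r^2 \sqrt{(n/r)^2-1}\,\zeta}$, which is $\le 1$ because $\zeta \ge 0$. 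This yields $\bigl| \langle P_{\simeq r}w_r(\cdot,\zeta), \varphi \rangle \bigr| \le \sum_{\mu(r) < |n| \le r+1} |\widehat{\varphi}_n|$. Applying the Cauchy--Schwarz inequality against the weights $(1+n^2)^{\pm s/2}$, as in \eqref{eq:Varphi_Is_Summable}, bounds the right-hand side by $\lVert \varphi \rVert_{H^s(\mathbb{T})}\, \bigl( \sum_{|n| > \mu(r)} (1+n^2)^{-s} \bigr)^{1/2}$. Since $2s > 1$ this series converges, hence its tail over $|n| > \mu(r)$ tends to $0$ as $r \to \infty$ because $\mu(r) \to \infty$ by \eqref{Definition_Of_Mu}. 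Taking the supremum over the unit ball of $H^s(\mathbb{T})$ then gives $\lVert P_{\simeq r}w_r(\cdot,|\zeta|) \rVert_{H^{-s}(\mathbb{T})} \to 0$, as claimed.

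I do not expect a genuine obstacle here. The argument is soft precisely because it discards every phase and estimates by absolute values, so no cancellation within the band $\mu(r) < |n| \le r+1$ is exploited --- in sharp contrast to the pointwise regime $|n| \ll r$ treated in Lemma~\ref{Lemma_w1_Pointwise}. The two points that deserve a moment's care are the uniform modulus bound on the oscillatory factors, including the borderline indices with $r < |n| \le r+1$ where one genuinely invokes $\zeta \ge 0$, and the role of the hypothesis $s > 1/2$: it is exactly what makes $\sum_n (1+n^2)^{-s}$ summable, and the method produces precisely the tail of that series, so $1/2$ is the natural threshold for this approach. The passage from testing against smooth periodic functions to the full $H^{-s}(\mathbb{T})$ norm is nothing more than the density of $\mathcal{P}(\mathbb{T})$ in $H^s(\mathbb{T})$.
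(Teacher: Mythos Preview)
Your proposal is correct and follows essentially the same approach as the paper: expand the pairing via \eqref{Exponentials_Periodic}, bound the phases in modulus by $1$, and apply Cauchy--Schwarz exactly as in \eqref{eq:Varphi_Is_Summable} to extract $\lVert \varphi \rVert_{H^s(\mathbb T)}$ times the tail of a convergent series. Your framing through the dual norm and the density of $\mathcal{P}(\mathbb T)$ in $H^s(\mathbb T)$ makes the passage to $H^{-s}$-convergence slightly more explicit than the paper does, but the argument is the same.
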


As in the previous section, the pointwise convergence of $P_{\ll r}w_r$ and $P_{>r+1}w_r$ implies the weaker convergence in $H^{-s}(\mathbb T)$.
\begin{lem}\label{Lemma_PeriodicDistributionsHighAndLow}
Let $s \geq 0$ and $\zeta \in \mathbb R$. If $\mu(r)$ satisfies \eqref{Condition_For_Mu_Low}, then 
\begin{equation}
\lim_{r \to \infty} \Big( P_{\ll r}w_r(\cdot, |\zeta|) - P_{\ll r}v(\cdot, |\zeta|) \Big) = 0 \qquad \text{ in } \quad H^{-s}(\mathbb T).
\end{equation}
Also, 
\begin{equation}
\lim_{r \to \infty}  P_{> r+1}w_r(\cdot, |\zeta|)  = 0 \qquad \text{ in } \quad H^{-s}(\mathbb T).
\end{equation}
\end{lem}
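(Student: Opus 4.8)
The statement to prove is Lemma~\ref{Lemma_PeriodicDistributionsHighAndLow}, which upgrades the pointwise convergence of $P_{\ll r}w_r - P_{\ll r}v$ and of $P_{>r+1}w_r$ (established in Lemmas~\ref{Lemma_w1_Pointwise} and \ref{Lemma_PointwiseConvergenceHigh}) to convergence in $H^{-s}(\mathbb{T})$ for any $s\ge 0$. The key observation is that $H^0(\mathbb{T})=L^2(\mathbb{T})$ embeds continuously into $H^{-s}(\mathbb{T})$ for every $s\ge 0$ (since $(1+n^2)^{-s}\le 1$), so it suffices to prove convergence in $L^2(\mathbb{T})$; and on the torus the $L^2$ norm is dominated by the $L^\infty$ norm, $\lVert f\rVert_{L^2(\mathbb{T})}\le \lVert f\rVert_{L^\infty(\mathbb{T})}$, because $\mathbb{T}$ has unit measure. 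Both quantities in the statement are genuine functions (finite trigonometric sums or an absolutely convergent series, evaluated at fixed $\zeta$), so these norms make sense.

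\textbf{Steps.} First, fix $s\ge 0$ and $\zeta\in\mathbb{R}$, and recall the symmetry $w_r(\cdot,\zeta)=w_r(\cdot,|\zeta|)$ so that it is enough to treat $\zeta\ge 0$. Second, for the low-frequency term, apply Lemma~\ref{Lemma_w1_Pointwise}, which gives $\lVert P_{\ll r}w_r - P_{\ll r}v\rVert_{L^\infty_{\xi,\zeta}(\mathbb{R}\times(0,\infty))}\to 0$; in particular, for fixed $\zeta$, the $\xi$-function $\xi\mapsto (P_{\ll r}w_r - P_{\ll r}v)(\xi,|\zeta|)$ is $1$-periodic and its sup norm over $\mathbb{T}$ is bounded by this vanishing quantity. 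Then the chain
\begin{equation}
\lVert (P_{\ll r}w_r - P_{\ll r}v)(\cdot,|\zeta|)\rVert_{H^{-s}(\mathbb{T})} \le \lVert (P_{\ll r}w_r - P_{\ll r}v)(\cdot,|\zeta|)\rVert_{L^2(\mathbb{T})} \le \lVert P_{\ll r}w_r - P_{\ll r}v\rVert_{L^\infty_{\xi,\zeta}}
\end{equation}
tends to zero as $r\to\infty$. Third, for the high-frequency term, do exactly the same using Lemma~\ref{Lemma_PointwiseConvergenceHigh}: $P_{>r+1}w_r(\cdot,|\zeta|)$ is a $1$-periodic function with $\lVert P_{>r+1}w_r(\cdot,|\zeta|)\rVert_{H^{-s}(\mathbb{T})}\le \lVert P_{>r+1}w_r\rVert_{L^\infty_{\xi,\zeta}(\mathbb{R}\times(0,\infty))}\to 0$.

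\textbf{Remarks on difficulty.} There is essentially no obstacle here; the lemma is a soft consequence of the two pointwise estimates already in hand, and the only thing to verify carefully is that the relevant objects are functions of $\xi$ (for fixed $\zeta$) to which the $L^\infty\hookrightarrow L^2\hookrightarrow H^{-s}$ comparison on the unit-measure torus applies — which is clear since $P_{\ll r}w_r$, $P_{\ll r}v$ are finite sums of characters and $P_{>r+1}w_r$ is an absolutely (indeed geometrically) convergent series for $\zeta>0$. One small point worth noting explicitly in the write-up: the embedding $L^2(\mathbb{T})\hookrightarrow H^{-s}(\mathbb{T})$ needs $s\ge 0$, which is exactly the hypothesis, and the comparison $\lVert f\rVert_{L^2(\mathbb{T})}\le\lVert f\rVert_{L^\infty(\mathbb{T})}$ uses $|\mathbb{T}|=1$. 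With these in place the proof is two lines per term, mirroring the proof of Lemma~\ref{Lemma_DistributionsHighAndLow} but with the $L^1$-pairing bound replaced by the $L^2$-into-$H^{-s}$ bound.
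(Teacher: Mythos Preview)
Your proof is correct and follows essentially the same idea as the paper: both arguments reduce the $H^{-s}(\mathbb{T})$ convergence to the $L^\infty$ bounds already obtained in Lemmas~\ref{Lemma_w1_Pointwise} and~\ref{Lemma_PointwiseConvergenceHigh}. The only cosmetic difference is that you use the norm-embedding chain $\lVert f\rVert_{H^{-s}(\mathbb T)}\le \lVert f\rVert_{L^2(\mathbb T)}\le \lVert f\rVert_{L^\infty(\mathbb T)}$ directly, whereas the paper bounds the dual pairing $|\langle f_r,\varphi\rangle|\le \lVert f_r\rVert_{L^\infty}\lVert\varphi\rVert_{L^1(\mathbb T)}\le \lVert f_r\rVert_{L^\infty}\lVert\varphi\rVert_{H^s(\mathbb T)}$; these are equivalent formulations of the same estimate.
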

\begin{proof}
If $s \geq 0$, for $\varphi \in H^s(\mathbb T)$ we have $\lVert \varphi \rVert_{L^1(\mathbb T)} \leq \lVert \varphi \rVert_{L^2(\mathbb T)} \leq \lVert \varphi \rVert_{H^s(\mathbb T)}$. Thus, if $f_r(\xi,\zeta)$ is any of the functions in the statement, by Lemmas~\ref{Lemma_w1_Pointwise} and \ref{Lemma_PointwiseConvergenceHigh} we get
\begin{equation}
\begin{split}
\lim_{r\to \infty} \left|  \langle f_r(\cdot,\zeta), \varphi \rangle \right| & \leq \lim_{r\to \infty} \int_{\mathbb T} \left| f_r(\xi,\zeta)\, \varphi(\xi) \right|\, d\xi \\ 
& \leq \lVert \varphi \rVert_{H^s(\mathbb T)} \lim_{r\to \infty} \lVert f_r \rVert_{L^{\infty}_{\xi,\zeta}(\mathbb R \times (0,\infty))} = 0.
\end{split}
\end{equation}
\end{proof}

From these lemmas the proof of the second part of Theorem~\ref{THEOREM_Convergence_Tempered_And_Periodic_Distributions} follows immediately.
\begin{proof}[Proof of second part of Theorem~\ref{THEOREM_Convergence_Tempered_And_Periodic_Distributions}]
Decompose $w_r$ like in \eqref{DecompositionOfWr} so that by Lemmas~\ref{Lemma_w2_Periodic} and \ref{Lemma_PeriodicDistributionsHighAndLow} it is enough to prove that $(v - P_{\ll r}v)(\cdot,\zeta)$ tends to zero when $r \to \infty$. Now, given $\varphi \in H^s(\mathbb T)$, 
\begin{equation}
\begin{split}
\left| \langle (v - P_{\ll r}v)(\cdot,\zeta), \varphi  \rangle \right| & = \Bigg| \sum_{|n| > \mu(r)} e^{-i \pi n^2 \zeta}  \langle e^{2\pi i n \xi},\varphi \rangle \Bigg| \\
&  \leq \sum_{|n| > \mu(r)} \left|  \widehat{\varphi}_n \right| \leq \lVert \varphi \rVert_{H^s(\mathbb T)} \left(  \sum_{n=\mu(r)}^\infty \frac{1}{n^{2s}} \right)^{1/2},
\end{split}
\end{equation}
which, as before, tends to zero if $2s>1$. 
\end{proof}

\section{Convergence along vertical and oblique lines: Theorems~\ref{THEOREM_Vertical} and \ref{THEOREM_Oblique}}
\label{SECTION_VerticalAndOblique}

We follow the strategy for Theorem~\ref{THEOREM_Convergence_Tempered_And_Periodic_Distributions}  to prove Theorems~\ref{THEOREM_Vertical} and \ref{THEOREM_Oblique}. 

\begin{proof}[Proof of Theorem~\ref{THEOREM_Vertical}]
Let $\xi \in \mathbb R$. We first prove the result for $w_r(\xi, \cdot) \, \mathbbm{1}_{(0,\infty)}$ following what we did for Theorem~\ref{THEOREM_Convergence_Tempered_And_Periodic_Distributions} with the decomposition \eqref{DecompositionOfWr}. Indeed, the analogue of Lemma~\ref{Lemma_DistributionsHighAndLow} is an immediate consequence of Lemmas~\ref{Lemma_w1_Pointwise} and \ref{Lemma_PointwiseConvergenceHigh}. Also, for $\varphi \in \mathcal S(\mathbb R)$,
\begin{equation}
\begin{split}
\left|  \langle v(\xi,\cdot)\, \mathbbm{1}_{(0,\infty)} - P_{\ll r}[ v(\xi,\cdot)\, \mathbbm{1}_{(0,\infty)}], \varphi \rangle \right| & \leq \sum_{|n| > \mu(r)} \left| \int_0^\infty \varphi(\zeta)\, e^{-i\pi n^2 \zeta}  d\zeta \right| \\
& \lesssim  \sum_{|n| > \mu(r)}\frac{1}{n^2},
\end{split}
\end{equation}
which tends to zero when $r\to \infty$. The last inequality holds because for every $x \in \mathbb R$ we have
\begin{equation}\label{DecayOfTruncatedFourier}
\left| -2\pi i x\, \int_0^\infty \varphi(\zeta)\, e^{-2\pi i \zeta x}\, d\zeta \right| = \left| \varphi(0) - \int_0^\infty \varphi'(\zeta) \, e^{-2\pi i \zeta x}\, d\zeta \right| \leq \lVert \varphi \rVert_{\infty} + \lVert \varphi' \rVert_1.
\end{equation}
Therefore, it is enough to prove that $\lim_{r\to \infty}P_{\simeq r}[w_r\, \mathbbm 1_{(0,\infty)}](\xi,\cdot) = 0$ in $\mathcal{S}'(\mathbb R)$. 
For $\varphi \in \mathcal S(\mathbb R)$, 
\begin{equation}
\begin{split}
\langle P_{\simeq r}[w_r\, \mathbbm 1_{(0,\infty)}](\xi,\cdot),\varphi \rangle &  = \sum_{\mu(r) < |n| < r+1} e^{2\pi i n \xi} \, \langle e^{-2\pi i r^2 \zeta}\, e^{2\pi i r^2  \sqrt{1 - (n/r)^2} \, \zeta}, \varphi  \mathbbm 1_{(0,\infty)}(\zeta) \rangle \\
&  = I + II
\end{split}
\end{equation}
where 
\begin{equation}
I =  \sum_{\mu(r) < |n|  \leq  r} e^{2\pi i n \xi}  \,
 \widehat{ \varphi  \mathbbm 1_{(0,\infty)}}\left( r^2 \left( 1 - \sqrt{1-(n/r)^2} \right) \right)
\end{equation}
and
\begin{equation}
II = e^{2\pi i  \lfloor r+1 \rfloor \xi} \, \langle e^{-2\pi i r^2 \zeta}\, e^{-2\pi r^2  \sqrt{( \lfloor r+1 \rfloor/r)^2 - 1}  \, \zeta}, \varphi  \mathbbm 1_{(0,\infty)}(\zeta) \rangle.
\end{equation}
Rewriting $r^2\left( 1 - \sqrt{1-(n/r)^2}\right) =n^2 /  \left( 1 + \sqrt{1-(n/r)^2}\right)$, by \eqref{DecayOfTruncatedFourier} we get 
\begin{equation}\label{OneInProof}
\begin{split}
|I| & \leq \sum_{\mu(r) < |n|  \leq  r} \left|  \widehat{ \varphi  \mathbbm 1_{(0,\infty)}}\left(  \frac{n^2}{  1 + \sqrt{1-(n/r)^2}} \right) \right| \lesssim \sum_{\mu(r) < |n|  \leq  r} \frac{  1 + \sqrt{1-(n/r)^2}}{n^2} \\
&  \lesssim \sum_{\mu(r) < |n|  \leq  r} \frac{  1}{n^2} ,
\end{split}
\end{equation}
which shows that $\lim_{r \to \infty}I = 0$. On the other hand, integrating by parts we get 
\begin{equation}\label{ByParts}
\begin{split}
|II| & = \left|  \int_0^\infty \varphi(\zeta) \, e^{-2\pi r^2  \sqrt{( \lfloor r+1 \rfloor/r)^2 - 1}  \, \zeta } \, e^{-2\pi i r^2 \zeta}\, d\zeta \right| \\
& \leq \frac{|\varphi(0)|}{2\pi r^2} +  \frac{1}{2\pi r^2}\,  \left|  \int_0^\infty  \varphi'(\zeta) \, e^{-2\pi r^2  \sqrt{ \left( \frac{\lfloor r+1 \rfloor}{r}\right)^2 - 1} \, \zeta } \, e^{-2\pi i r^2 \zeta}\, d\zeta \right| \\
& \qquad \qquad + \sqrt{ \left( \frac{\lfloor r+1 \rfloor}{r} \right)^2 - 1} \, \left|  \int _0^\infty \varphi(\zeta) \, e^{-2\pi r^2  \sqrt{ \left( \frac{\lfloor r+1 \rfloor}{r}\right)^2 - 1}  \, \zeta }  \,e^{-2\pi i r^2 \zeta} \, d\zeta \right|,
\end{split}
\end{equation}
so 
\begin{equation}
\lim_{r\to\infty} |II| \leq \lim_{r\to \infty}  \left(  \frac{\lVert \varphi' \rVert_1}{2\pi r^2} +  \lVert \varphi \rVert_1  \sqrt{ \left( \frac{\lfloor r+1 \rfloor}{r}\right)^2 - 1} \right) = 0,
\end{equation}
which concludes the first part of the theorem. Finally, if $w_r(\xi,\zeta)$ is considered for all $\zeta \in \mathbb R$, then $w_r(\xi,\zeta)=w_r(\xi,|\zeta|)$ and the proof is the same as above if we replace $\varphi(\zeta)$ with $\varphi(\zeta) + \varphi(-\zeta)$ in the computations. 
\end{proof}

We now prove Theorem~\ref{THEOREM_Oblique}.
\begin{proof}[Proof of Theorem~\ref{THEOREM_Oblique}]
Let $m>0$. The case $m<0$ is analogous, so we do not write it. 
For the same reasons as in the proof of Theorem~\ref{THEOREM_Vertical} above, \eqref{DecayOfTruncatedFourier} implies that 
\begin{equation}
\begin{split}
& \left|  \langle v(\xi,m\xi - k) \mathbbm{1}_{(k/m,\infty)} - P_{\ll r}[ v(\xi,m\xi - k) \mathbbm{1}_{(k/m,\infty)}], \varphi \rangle \right| \\
& \qquad \qquad  \qquad \qquad \qquad  \qquad   = \left|  \sum_{|n| > \mu(r)} \int_{k/m}^\infty \varphi(\xi)\, e^{2\pi i n \xi - i\pi n^2(m\xi - k)}\, d\xi \right|  \\
& \qquad \qquad  \qquad \qquad \qquad  \qquad \leq \sum_{|n| > \mu(r)}   \left|  \int_{k/m}^\infty \varphi(\xi)\, e^{-2\pi i \xi ( mn^2/2 - n) }\, d\xi   \right| \\
& \qquad \qquad \qquad \qquad \qquad  \qquad  \lesssim \sum_{|n| > \mu(r)}  \frac{1}{  |mn^2/2-n|} \to 0  \qquad \text{ as } r \to \infty, 
\end{split}
\end{equation}
for all $\varphi \in \mathcal S (\mathbb R)$.
Thus, it is enough to prove $\lim_{r\to\infty} P_{\simeq r}w_r(\xi,m\xi - k)\mathbbm{1}_{(k/m,\infty)} = 0$ in $\mathcal S'(\mathbb R)$. 
Given $\varphi \in \mathcal S(\mathbb R)$, 
\begin{equation}
\begin{split}
& \langle  P_{\simeq r}w_r(\xi,m\xi - k)\mathbbm{1}_{(k/m,\infty)}, \varphi     \rangle \\
&    \qquad  = \sum_{\mu(r) < |n| < r+1} e^{2\pi i r^2 k \left( 1 - \sqrt{1 - (n/r)^2} \right)} \langle  e^{2\pi i n \xi } e^{-2\pi i r^2 m \xi \left(1 - \sqrt{1-(n/r)^2} \right)}, \varphi \mathbbm{1}_{(k/m,\infty)}  \rangle \\
&    \qquad  = I + II,
\end{split}
\end{equation}
where $I$ corresponds to the sum in $\mu(r) < |n| \leq r$ and $II$ to $|n| = \lfloor r+1 \rfloor$. Now, like in \eqref{OneInProof},
\begin{equation}
\begin{split}
|I| & \leq \sum_{\mu(r) < |n| \leq r} \left|  (\varphi \mathbbm{1}_{(k/m,\infty)})^\wedge\left( -n + \frac{mn^2}{ 1 + \sqrt{1-(n/r)^2} } \right) \right| \\
& \lesssim_\varphi \sum_{\mu(r) < |n| \leq r} \frac{1}{\left| -n + \frac{mn^2}{ 1 + \sqrt{1-(n/r)^2} } \right|} \\
& \lesssim  \sum_{\mu(r) < |n| \leq r} \frac{1}{mn^2} \to 0 \qquad \text{ as } r \to \infty.
\end{split}
\end{equation}
On the other hand, with $n=\lfloor r+1 \rfloor$ we write
\begin{equation}
II = \int_{k/m}^\infty \varphi(\xi) \, e^{-2\pi (m\xi - k) r^2\sqrt{(n/r)^2-1}}\, e^{2\pi i n \xi - 2\pi i r^2(m\xi-k)}\, d\xi.
\end{equation}
Either integrating by parts as in \eqref{ByParts}, or by the theorem of dominated convergence because $r^2\sqrt{(n/r)^2 - 1} \leq r^2\sqrt{(r+1)^2/r^2-1} = r\sqrt{2r+1} \to \infty$, we get $\lim_{r\to \infty} II = 0$ and the proof of the first part is complete.

Finally, if for $\xi \in \mathbb R$  we write $w_r(\xi,m\xi-k) =  w_r(\xi,|m\xi-k|)$, then after changing variables and a few manipulations the proof essentially reduces to the procedure above replacing  $\varphi(\xi)$ with $\varphi(\xi) + \varphi(-\xi)$.
\end{proof}

\appendix

\section{The Talbot effect as the solution to the free Schr\"odinger equation}\label{AppendixTalbot}
For the sake of completeness, we briefly explain why the solution to the free Schr\"odinger equation \eqref{Free_Schrodinger_Solution} captures the most important features of the Talbot effect. For that we use Figure~\ref{Figure_TalbotCarpet} as a reference. We recall that $z = z_T \, \zeta$.

First of all, $v$ is periodic of period 2 in the vertical variable $\zeta$, in the same way that Figure~\ref{Figure_TalbotCarpet} shows the repetition of the grating at height $z = 2z_T$. But the Talbot carpet shows many more patterns that correspond to rational multiples $z = (p/q)z_T$. Indeed, evaluating $v$ at $\zeta=p/q$ where $p$ and $q$ are coprime integers, we may split the sum by $n = mq+r$ to write
\begin{equation}
\begin{split}
v(\xi,p/q) & = \sum_{n \in \mathbb Z} e^{2\pi i n \xi - i \pi n^2 p/q} = \sum_{m\in \mathbb Z} \sum_{r = 0}^{q-1} e^{2\pi i (mq+r)\xi - i\pi (mq+r)^2 p/q} \\
& = \sum_{r = 0}^{q-1}  e^{2\pi i r\xi - i \pi r^2 p/q}   \sum_{m\in \mathbb Z} e^{2\pi i mq \xi - i\pi m^2 pq}.
\end{split}
\end{equation}
The fact that $e^{i\pi m^2} = (-1)^{m^2} = (-1)^m$ for every $m \in \mathbb Z$ allows us to rewrite the sum in $m$ in such a way that the Poisson summation formula can be used, so that 
\begin{equation}\label{Talbot_By_Schrodinger}
\begin{split}
v(\xi,p/q) & =  \frac{1}{q}\,    \sum_{r = 0}^{q-1}  e^{2\pi i r\xi - i \pi r^2 p/q} \sum_{m\in \mathbb Z} \delta\left(\xi - \frac{p \, (\operatorname{mod} 2)}{2} - \frac{m}{q}\right) \\
& = \frac{1}{q}\,  \sum_{m\in \mathbb Z} \left(  \sum_{r = 0}^{q-1}  e^{2\pi i \frac{(qp (\operatorname{mod } 2)/2 + m) r - (p/2) r^2}{q} } \right)  \delta\left(\xi - \frac{p \, (\operatorname{mod} 2)}{2} - \frac{m}{q}\right) \\
& : =  \frac{1}{q}\,  \sum_{m\in \mathbb Z} \Gamma(p,q;m)  \delta\left(\xi - \frac{p \, (\operatorname{mod} 2)}{2} - \frac{m}{q}\right).
\end{split}
\end{equation}
We can observe that $\Gamma(p,q;m)$ is a variation of the classical generalized quadratic Gauss sums, and it satisfies $|\Gamma(p,q;m)| = \sqrt{q}$ for every $p,q$ and $m$. Depending on the values of $p$ and $q$, the steps needed to prove this are different, see Appendix~\ref{Section_GaussSums}.

Expression \eqref{Talbot_By_Schrodinger} shows a sum of Dirac deltas at $\zeta = p/q$ with a separation of $1/q$ and with an amplitude $1/\sqrt{q}$. This represents a copy of the grating with $q$ times as many slits and $\sqrt{q}$ times less intense. Also, if $p$ is odd, the copy is shifted half a period with respect to the initial grating. This is precisely what the Talbot carpet in Figure~\ref{Figure_TalbotCarpet} shows.

\section{A few remarks on generalized quadratic Gauss sums}\label{Section_GaussSums}
The standard generalized quadratic Gauss sums
\begin{equation}
G(a,b,c) = \sum_{r=0}^{c-1} e^{2\pi i \frac{ar^2+br}{c}}, \qquad a,b \in \mathbb Z, \, \,  c\in \mathbb N,
\end{equation}
have been vastly studied in the literature. For instance, it is well-known that $|G(a,b,c)| = \sqrt{c}$ for $c$ odd (for instance, completing the square and using \cite[Theorem 1.5.2]{BerndtEvansWilliams1998}). With that property, if $p$ is even in $\Gamma(p,q;m)$ we get $p\pmod{2}=0$ and $q$ odd and thus $|\Gamma(p,q;m)| = |G(-p/2,m,q)|=\sqrt{q}$. If $p$ is odd, by direct computation we have
\begin{equation}
\sum_{r=0}^{q-1} e^{2\pi i \frac{(q+2m)(r+q) - p(r+q)^2}{2q}} = \sum_{r=0}^{q-1} e^{2\pi i \frac{(q+2m)r - p\, r^2}{2q}} , 
\end{equation}
so 
\begin{equation}
\Gamma(p,q;m) = \sum_{r=0}^{q-1} e^{2\pi i \frac{ (q+2m)r - p\, r^2  }{2q}} = \frac12\, \sum_{r=0}^{2q-1}e^{2\pi i \frac{ (q+2m)r - p\,r^2  }{2q}} = \frac12\,  G(-p,2m+q,2q).
\end{equation}
With this and the multiplicative property $G(a,b,cd) = G(ac,b,d)G(ad,b,c)$ for coprime $c,d$ (\cite[Lemma 1.2.5]{BerndtEvansWilliams1998}), if $q$ is odd we can write
\begin{equation}
\begin{split}
|\Gamma(p,q;m) | & = \frac12 |G(-2p,2m+q,q)G(-pq,2m+q,2)| \\ 
& = \frac12 \, \sqrt{q}\,  |G(1,1,2)| =  \sqrt{q}.
\end{split}
\end{equation}
We are left with the case $p$ odd and $q$ even. In this case, since $p$ and $2q$ are coprime and $2m+q$ is even, completing the square and denoting $p^{-1}$ to be the inverse of $p$ modulo $2q$, we can write
\begin{equation}
G(-p,2m+q,2q) = e^{ -2\pi i p^{-1} (\frac{2m+q}{2})^2 }\, G(-p,0,2q).
\end{equation}
Since $2q \equiv 0 \text{ (mod } 4)$, the value of the quadratic Gauss sum is known (\cite[Theorem 1.5.4]{BerndtEvansWilliams1998}), so
\begin{equation}
|\Gamma(p,q;m) |  = \frac12 \,  |G(-p,0,2q)| = \frac12 \left|1+i^{-p}\right|\sqrt{2q} = \sqrt{q}.
\end{equation}

\begin{acknowledgements*}
This research is supported by the Simons Foundation Collaboration Grant on Wave Turbulence (Nahmod’s Award ID 651469).
\end{acknowledgements*}

\bibliographystyle{acm}
\bibliography{Talbot.bib}

\end{document}